\newtheorem{thm}{Theorem}[section]
\newtheorem{lem}[thm]{Lemma}
\newtheorem{prop}[thm]{Proposition}
\newtheorem{fact}[thm]{Fact}
\newtheorem{prob}[thm]{Problem}
\theoremstyle{remark}
\newtheorem{example}[thm]{Example}
\begin{document}

\title{Measures and fibers}

\subjclass[2010]{03E35,03E75,28A60}
\keywords{metrizably-fibered spaces, Suslinean spaces, Martin's Axiom, non-separable measures, countably determined measures, Radon measures}
\thanks{The author was partially supported by National Science Center grant no. 2013/11/B/ST1/03596 (2014-2017).}

\author[Piotr Borodulin--Nadzieja]{Piotr Borodulin-Nadzieja}
\address{Instytut Matematyczny, Uniwersytet Wroc\l awski}
\email{pborod@math.uni.wroc.pl}

\date{24 February 2016}

\begin{abstract} 
We study measures on compact spaces by analyzing the properties of fibers of continuous mappings into $2^\omega$. We show that if a compact zerodimensional space $K$ carries a measure of uncountable Maharam type, then such a mapping has a
non-scattered fiber and, if we assume additionally a weak version of Martin's Axiom, such a mapping has a fiber carrying a measure of uncountable Maharam type.	Also, we prove that every compact zerodimensional space which supports a strictly positive
measure and which can be mapped into $2^\omega$ by a finite-to-one function is separable.
\end{abstract}

\maketitle

\section{Introduction}

We say that a compact space $K$ is \emph{$\varphi$-fibered} (or \emph{has fibers satisfying $\varphi$}) if there is a compact metric space $M$ and a continuous function $f\colon K\to M$ such that $f^{-1}[\{x\}]$ has property $\varphi$ for each $x\in M$. So, we can consider e.g. $n$-fibered,
finitely-fibered, metrizably-fibered or spaces with scattered fibers.

Questions about properties of fibers of continuous mappings to metric spaces appear quite naturally in many contexts. The question if it is consistent that all
perfectly normal compact spaces are $2$-fibered is one of the most important questions of set theoretic topology (see \cite{Gruenhage-problems}). Metrizably- and finitely-fibered spaces were considered in the context of spaces with a small diagonal
(see e.g. \cite{Gruenhage}, \cite{Dow-Hart-diagonal}) and of Rosenthal compacta (see e.g. \cite{Kubis}). Non-separable linearly-fibered spaces are in a sense direct generalizations of the Suslin line and were studied by Moore (\cite{Moore}) and Todor\v{c}evi\'{c} (\cite{Todorcevic}, see also Example \ref{Stevo}).

The systematic study of metrizably-fibered spaces was undertaken in \cite{Tkachuk}. Independently, Tkachenko (\cite{Tkachenko}) considered a slightly weaker notion than being $\varphi$-fibered. We say that
a space $K$ is \emph{$\varphi$-approximable} if there is a
countable cover by closed sets such that the maximal intersections of elements of this cover have property $\varphi$ (by replacing ``closed sets'' with ``zero sets'' we obtain an equivalent definition of $\varphi$-fibered space, see \cite[Proposition 2.1]{Tkachuk}).
Tkachenko investigated e.g. for which properties $\varphi$ spaces satisfying $\varphi$ are $\varphi$-approximable.

In this article we will examine how the existence of certain types of measures affect the properties of fibers. For the sake of this section say that
a compact space $K$ has property $A(\kappa)$ if it can be mapped continuously onto $[0,1]^\kappa$ and $K$ has property $M(\kappa)$ if it carries a measure of Maharam type $\kappa$ (see Section \ref{non-separable} for the definitions). Loosely
speaking by determining for which $\kappa$ a compact space has $A(\kappa)$ we can measure its \emph{combinatorial complexity} and similarly using $M(\kappa)$ we can measure its \emph{measure-theoretic complexity}. 

Note that for each $\kappa$ the property $A(\kappa)$ implies $M(\kappa)$. In general, the converse
holds only for some $\kappa$'s. For example, the scattered spaces (i.e. those which do not have $A(\omega)$) carry only purely atomic measures and so they do not have $M(\omega)$. Fremlin proved that under $\mathsf{MA}_{\omega_1}$ the property $A(\omega_1)$ is equivalent to
$M(\omega_1)$ and there are many consistent examples of spaces with $M(
\omega_1)$ and without $A(\omega_1)$.
For the results concerning the relationship between $A(\kappa)$ and $M(\kappa)$ we refer the reader e.g. to \cite{Plebanek02}. In this article we will be interested only in cases when $\kappa\leq \omega_1$.

Let $K$ be a compact zerodimensional space and let $f\colon K\to 2^\omega$ be a continuous mapping. Of course, $A(\omega)$ is not necessarily inherited by fibers of $f$, since $K$ having $A(\omega)$ can be even $1$-fibered. However, Tkachenko proved
that if $K$ has $A(\omega_1)$, then one of the fibers of $f$ has
$A(\omega_1)$ (see \cite{Tkachenko}, we reprove this result in Section \ref{non-separable}). 

The property $M(\omega_1)$ in this context behaves in a more complicated way. 
If $\mathrm{cov}(\mathcal{N})=\omega_1$, then there is a compact space with $M(\omega_1)$ which has fibers homeomorphic to $2^\omega$, and so none of them has $M(\omega_1)$ (see Example \ref{kunen}). On the other hand, the theorems of Fremlin and of
Tkachenko mentioned above
imply that under $\mathsf{MA}_{\omega_1}$ the property $M(\omega_1)$ has to be inherited by some fiber. 
In Section
\ref{non-separable} we prove that under $\mathsf{MA}_{\omega_1}$ for measure algebras ($\mathsf{MA}_{\omega_1}(\mathrm{ma})$, in short) every zerodimensional space with $M(\omega_1)$ has a fiber with $M(\omega_1)$. Note that $\mathsf{MA}_{\omega_1}(\mathrm{ma})$ is considerably weaker than
$\mathsf{MA}_{\omega_1}$. In particular it is consistent with the existence of space with $M(\omega_1)$ and without $A(\omega_1)$. On the other hand, $\mathsf{MA}_{\omega_1}(\mathrm{ma})$ is equivalent to
$\mathrm{cov}(\mathcal{N}_{\omega_1})>\omega_1$, which is just a slightly stronger axiom than $\mathrm{cov}(\mathcal{N})>\omega_1$.

In Section \ref{scattered} we show in $\mathsf{ZFC}$ that the zerodimensional spaces with $M(\omega_1)$ cannot have too \emph{simple} fibers. Namely, for every continuous $f\colon K\to 2^\omega$, where $K$ has $M(\omega_1)$, at least one fiber of
$f$ has $M(\omega)$. There are many properties of compact spaces implying that a space carries only
separable measures (e.g. linearity, being a Rosenthal compactum, being a Stone space of a minimally generated Boolean algebra). Theorem \ref{perfect fiber} adds the property of being a space with scattered fibers to this list. 
In Section \ref{examples} we provide some known examples of spaces with scattered fibers. Theorem \ref{perfect fiber} implies that all of them carry only separable measures.

Section \ref{scattered} contains one more results of this sort. Say that a compact space has property ($\star$) if it is non-separable and supports a (strictly positive) measure. In a sense such spaces are big from the measure-theoretic point of view, similarly to spaces having $M(\omega_1)$. There
are spaces having $M(\omega_1)$ but not ($\star$) (e.g. $2^{\omega_1}$). The question if ($\star$) implies $M(\omega_1)$ is quite interesting and, in fact, it was one of the motivations for our research (see \cite{Pbn-Grzes-16} for further discussion). It turns out that there are, at least consistently,
non-separable zerodimensional spaces supporting measures and having scattered fibers (see remarks at the end of Section \ref{examples}). In light of Theorem \ref{perfect fiber} those spaces have ($\star$) but not $M(\omega_1)$. In Section \ref{scattered}
we prove Theorem \ref{fin fibered} saying that the spaces with ($\star$) at least cannot be finitely-fibered. 

In Section \ref{examples} we present some relevant examples and we give some additional motivation for our research.


Let us mention that although the authors mentioned at the beginning of this section have not considered properties connected to measures in the context of fibers, the study of measures on fibers of measurable mappings is an important part of probability theory. It is
worth
to recall here well-known Rokhlin's theorem:
\begin{thm}[\cite{Rokhlin}]
	Let $K$ be a separable compact space and let $f\colon K\to M$ be a measurable mapping into the compact metric space $M$. If $\mu$ is a measure on $K$, then there is a family of measures $\{\mu_x\colon x\in M\}$ such that 
	\begin{itemize}
		\item $\mu_x$ is supported on $f^{-1}[\{x\}]$ for every $x\in M$,
		\item $\mu(B) = \int_{x\in M} \mu_x(B) \ d\mu f^{-1}$ for each measurable $B\subseteq K$,
		\item the mapping $x \mapsto \mu_x(B)$ is measurable for each measurable $B\subseteq K$.
	\end{itemize}
\end{thm}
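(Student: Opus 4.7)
The plan is to construct the family $\{\mu_x : x \in M\}$ by disintegrating $\mu$ against its pushforward via the Radon--Nikodym theorem, and then to upgrade the resulting almost-everywhere identities to genuine pointwise measures on $\nu$-almost every fiber. I first set $\nu = \mu \circ f^{-1}$ on $M$. For each Borel $B \subseteq K$, the set function $A \mapsto \mu(B \cap f^{-1}[A])$ is a finite Borel measure on $M$, absolutely continuous with respect to $\nu$, so Radon--Nikodym supplies a density $g_B \colon M \to [0, \mu(K)]$. The hoped-for disintegration is $\mu_x(B) := g_B(x)$, but at this stage $B \mapsto g_B(x)$ is only well-defined modulo a $\nu$-null set that depends on $B$.

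Next I would fix a countable algebra $\mathcal{A}$ of Borel subsets of $K$ generating the Borel $\sigma$-algebra (available since $K$ is compact metric) and choose one representative $g_A$ for each $A \in \mathcal{A}$. On a $\nu$-conull set $M_0 \subseteq M$ one can then impose countably many a.e.\ identities simultaneously: $g_\emptyset \equiv 0$, $g_K \equiv 1$ after scaling, $g_A \geq 0$, and the finite additivity relation $g_{A \cup B} + g_{A \cap B} = g_A + g_B$ for $A, B \in \mathcal{A}$. One also checks that $g_{f^{-1}[V]} = \mathbf{1}_V$ $\nu$-a.e.\ for each Borel $V \subseteq M$, directly from the defining identity of the density.

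The main obstacle is countable additivity: on $M_0$ the finitely additive premeasure $A \mapsto g_A(x)$ on $\mathcal{A}$ must extend to a genuine Borel measure on $K$. This is the step that crucially uses both compactness and metrizability. The strategy is to approximate each $A \in \mathcal{A}$ from inside by compacts (or to enrich $\mathcal{A}$ with a countable family of such approximants) and use the inner regularity of $\mu$ to conclude that $g_{A_n}(x) \downarrow 0$ holds $\nu$-a.e.\ whenever $A_n \downarrow \emptyset$ in $\mathcal{A}$. After removing one more countable union of null sets, this delivers countable additivity on $\mathcal{A}$ for every $x \in M_0$, and Carathéodory extension then yields a Borel measure $\mu_x$ on $K$. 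On the remaining $\nu$-null set one defines $\mu_x$ arbitrarily.

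Finally, to locate the supports, I would take a countable base $\{U_n : n \in \omega\}$ of the topology on $M$ and exploit $g_{f^{-1}[M \setminus U_n]}(x) = 0$ for $\nu$-a.e.\ $x \in U_n$. Discarding countably many further null sets gives, for every $x \in M_0$,
\[
\mu_x\bigl(K \setminus f^{-1}[\{x\}]\bigr) = 0, \qquad \text{since } f^{-1}[\{x\}] = \bigcap \{f^{-1}[U_n] : x \in U_n\}.
\]
The integral identity $\mu(B) = \int_M \mu_x(B)\,d\nu(x)$ and the measurability of $x \mapsto \mu_x(B)$ hold by construction on $\mathcal{A}$ and propagate to all Borel $B$ by the monotone class theorem. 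The principal difficulties in the whole scheme are the simultaneous handling of continuum-many null sets in the additivity step and the essential reliance on inner regularity for countable additivity — everything else is bookkeeping.
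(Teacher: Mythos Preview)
The paper does not prove this theorem at all: it is stated as Rokhlin's result with a citation to \cite{Rokhlin}, and the author immediately remarks that it will not be of much use for the paper's purposes (since the focus is on non-separable compacta). So there is no proof in the paper to compare your sketch against.

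Your outline is the standard Radon--Nikodym/martingale route to disintegration and is essentially correct in the compact \emph{metric} case. One point to flag: you justify the countable generating algebra by writing ``available since $K$ is compact metric'', but the hypothesis in the paper's statement is only that $K$ is \emph{separable} compact, with $M$ compact metric. A separable compact Hausdorff space need not be second countable (e.g.\ $\beta\omega$), so a countable algebra generating the Borel $\sigma$-algebra need not exist as stated. In Rokhlin's original setting one works with Lebesgue spaces, and the reduction from ``separable compact with a Radon measure'' to that framework (or an argument via the separable measure algebra of $\mu$) is the extra ingredient you would need to supply; once that reduction is in place, your countable-algebra bookkeeping goes through as written. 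The remaining steps --- finite additivity on a conull set, inner regularity to force countable additivity, Carath\'eodory extension, and the monotone class argument for measurability of $x\mapsto\mu_x(B)$ --- are all correct and in the expected order.
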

The family $\{\mu_x\colon x\in M\}$ is called \emph{a disintegration} of $\mu$. For our purposes this theorem will not be of big use, since we are rather interested in compact spaces which are not necessarily separable.

\section{Fibers with non-separable measures.}\label{non-separable}

A \emph{fiber} of a mapping $f\colon K \to M$ is a set of the form $f^{-1}[\{t\}]$, where $t\in M$. In what follows we shall constantly abuse notation by writing $f^{-1}(t)$ instead of $f^{-1}[\{t\}]$. More generally, a \emph{cylinder} of $f$ is a
set of the form $f^{-1}[X]$, where $X\subseteq M$.

We begin with a fact which is in principle \cite[Corollary 2.13]{Tkachenko}. We prove it by a slightly different method. 
\begin{fact}\label{big fibered}
	Assume that $K$ is a compact space and $f\colon K \to M$ is a continuous function into a compact metric space. If $K$ can be mapped continuously onto $[0,1]^{\omega_1}$, then there is $t\in M$ such that $f^{-1}(t)$ can be mapped continuously onto $[0,1]^{\omega_1}$.
\end{fact}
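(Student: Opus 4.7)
The plan is to construct a single point $t^* \in M$ together with a ``big box'' $B_\infty \subseteq [0,1]^{\omega_1}$ contained in $g(f^{-1}(t^*))$; then projecting this box onto its uncountably many free coordinates will give the desired continuous surjection from $f^{-1}(t^*)$ onto $[0,1]^{\omega_1}$. Here $g\colon K \to [0,1]^{\omega_1}$ denotes the given continuous surjection, and the construction will be a tree argument driven by the Baire category theorem.

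First I will fix a compatible metric on $M$ and, for every $n \ge 1$, a finite closed cover $\mathcal{U}_n$ of $M$ by sets of diameter less than $1/n$ (available because $M$ is compact metric). I will build inductively a branch $U_1, U_2, \ldots$ with $U_n \in \mathcal{U}_n$, together with a nondecreasing sequence of finite sets $F_n \subseteq \omega_1$ and a nonincreasing family of nonempty closed sets $V^n_\alpha \subseteq [0,1]$ for $\alpha \in F_n$, preserving the invariant
\[
B_n \;:=\; \prod_{\alpha \in F_n} V^n_\alpha \;\times\; [0,1]^{\omega_1 \setminus F_n} \;\subseteq\; g(K_n),
\]
where $K_n := f^{-1}(U_1 \cap \cdots \cap U_n)$. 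The base case is $F_0 = \emptyset$, $B_0 = [0,1]^{\omega_1} = g(K)$. For the inductive step, $K_n = \bigcup_{U \in \mathcal{U}_{n+1}} f^{-1}(U_1 \cap \cdots \cap U_n \cap U)$, so $B_n = \bigcup_U \bigl(g(f^{-1}(U_1 \cap \cdots \cap U_n \cap U)) \cap B_n\bigr)$ expresses the compact Hausdorff (thus Baire) space $B_n$ as a finite union of closed subsets; some $U_{n+1}$ yields a piece with nonempty interior in $B_n$. That interior contains a basic open subbox of $B_n$, and taking its closure produces $B_{n+1}$, at the cost of restricting finitely many additional coordinates and perhaps shrinking some of the old $V^n_\alpha$.

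Next I will pass to the limit. Set $t^* = \bigcap_n (U_1 \cap \cdots \cap U_n)$, a singleton since diameters tend to $0$; then $f^{-1}(t^*) = \bigcap_n K_n$. Let $F_\infty = \bigcup_n F_n$, which is countable, and $V^\infty_\alpha = \bigcap_{n : \alpha \in F_n} V^n_\alpha$, nonempty by compactness of $[0,1]$. Put $B_\infty = \prod_{\alpha \in F_\infty} V^\infty_\alpha \times [0,1]^{\omega_1 \setminus F_\infty}$. For any $y \in B_\infty$ one has $y \in B_n \subseteq g(K_n)$ for every $n$, so the sets $g^{-1}(y) \cap K_n$ form a decreasing sequence of nonempty closed subsets of $K$; by compactness their intersection $g^{-1}(y) \cap f^{-1}(t^*)$ is nonempty, whence $y \in g(f^{-1}(t^*))$. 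Thus $B_\infty \subseteq g(f^{-1}(t^*))$. Since $F_\infty$ is countable, $|\omega_1 \setminus F_\infty| = \omega_1$; composing $g|_{f^{-1}(t^*)}$ with the coordinate projection $B_\infty \to [0,1]^{\omega_1 \setminus F_\infty} \cong [0,1]^{\omega_1}$ yields the required continuous surjection.

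The hard part will be the inductive step: extracting a piece with nonempty interior in $B_n$ uses Baire category on the compact Hausdorff space $B_n$ together with the fact that $\mathcal{U}_{n+1}$ is finite (compactness of $M$). The crucial bookkeeping is that the ``box'' form of $B_n$ survives subbox extraction and closure, which works because a basic open set in $B_n$ restricts only finitely many new coordinates per step, so $F_\infty$ ends up countable and leaves $\omega_1$ many coordinates free at the end.
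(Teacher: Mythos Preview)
Your argument is correct (with one cosmetic slip: the final composition should be $\pi\circ g|_{f^{-1}(t^*)}$ with the projection $\pi\colon [0,1]^{\omega_1}\to[0,1]^{\omega_1\setminus F_\infty}$, not a map out of $B_\infty$; the surjectivity then follows from $B_\infty\subseteq g(f^{-1}(t^*))$ as you showed).

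Your route is genuinely different from the paper's. The paper works with the dyadic family $A^0_\alpha=g^{-1}[\{x:x(\alpha)\ge 2/3\}]$, $A^1_\alpha=g^{-1}[\{x:x(\alpha)\le 1/3\}]$ and proves a combinatorial claim: if all finite boolean intersections of these sets indexed in $[\alpha,\omega_1)$ meet $f^{-1}[F]$, then after splitting $F$ into two closed halves one of the halves retains this property for some tail $[\beta,\omega_1)$; the argument is a two-piece pigeonhole (if one half fails via witnesses in $I$, the other must succeed beyond $\max I$, else the concatenated witness kills $F$ itself). Iterating gives $t$, and the surviving dyadic system on $f^{-1}(t)$ yields a surjection onto $2^{\omega_1}$. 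You instead run a Baire-category argument on boxes: at each stage a finite closed cover of $M$ induces a finite closed cover of the current box $B_n$, so some piece has nonempty interior and hence contains a sub-box. Both schemes sacrifice only countably many coordinates in total; the paper does so by pushing the tail index $\alpha\mapsto\beta$ forward, you by enlarging the finite set $F_n$. Your version is a bit more conceptual (it isolates the role of Baire category and of $M$ being metric via the finite covers), while the paper's version makes the independent-family structure explicit, which aligns better with the later dyadic/measure-theoretic arguments in Section~\ref{non-separable}.
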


\begin{proof}
	Let $g\colon K \to [0,1]^{\omega_1}$ be a continuous surjection. For each $\alpha<\omega_1$ let \[A^0_\alpha = g^{-1}[\{x \colon x(\alpha)\geq 2/3\}] \mbox{    and    } A^1_\alpha = g^{-1}[\{x\colon x (\alpha)\leq 1/3\}].\]
\noindent \textbf{Claim.} Assume that $F\subseteq M$ is a closed set and $\alpha<\omega_1$ is such that \[ \bigcap_{i\in I} g^{-1}[A^{\epsilon(i)}_i] \cap f^{-1}[F] \ne \emptyset \] for each finite $I\subseteq [\alpha,\omega_1)$ and $\epsilon\colon I\to\{0,1\}$. Then there is a closed $F'\subseteq F$, $\mathrm{diam}(F')\leq 
\mathrm{diam}(F)/2$ and $\beta\geq \alpha$ such that \[ \bigcap_{i\in I} g^{-1}[A^{\epsilon(i)}_i] \cap f^{-1}[F'] \ne \emptyset\] for each finite $I\subseteq [\beta,\omega_1)$ and $\epsilon\colon I\to\{0,1\}$.\medskip

Suppose the contrary. Let $F=F_0\cup F_1$, where $\mathrm{diam}(F_i)\leq \mathrm{diam}(F)/2$, $F_i$ - closed for $i\in\{0,1\}$. Suppose that there is finite $I\subseteq [\alpha,\omega_1)$ and $\epsilon\colon I\to \{0,1\}$ such that 
\[ \bigcap_{i\in I} g^{-1}[A^{\epsilon(i)}_i] \cap f^{-1}[F_0] = \emptyset. \]
(Otherwise, there is nothing to prove.) Let $\beta>\max I$ and suppose, towards contradiction, that there is a finite $J\subseteq [\beta,\omega_1)$ and $\delta\colon J\to \{0,1\}$ such that
\[ \bigcap_{i\in J} g^{-1}[A^{\delta(i)}_i] \cap f^{-1}[F_1] = \emptyset. \]
Then 
\[ \bigcap_{i\in J} g^{-1}[A^{\delta(i)}_i] \cap f^{-1}[F] \subseteq f^{-1}[F_0]. \]
and thus
\[ \bigcap_{i\in I \cup J} g^{-1}[A^{\delta\cup\epsilon(i)}_i] \cap f^{-1}[F] =\emptyset, \]
a contradiction.
\medskip

Notice that \[ \bigcap_{i\in I} g^{-1}[A^{\epsilon(i)}_i] \ne \emptyset \] for every finite $I\subseteq \omega_1$ and every $\epsilon\colon I\to \{0,1\}$. Hence, starting with $F_0=K$ and subsequently using the claim we can find inductively a decreasing sequence $(F_n)$ of closed sets. If $t$ is such that $\{t\} = \bigcap F_n$, then $t$ has the desired property. 
\end{proof}

Let $\mu$ be a measure on a compact space $K$ and let $\mathcal{E}$ be a family of measurable subsets of $K$.
We say that a measure $\mu$ is \emph{approximated by} $\mathcal{E}$ if 
\begin{equation}\label{*}
\inf\{\mu(A \bigtriangleup E)\colon E\in \mathcal{E}\} = 0 
\end{equation}
for each $\mu$-measurable $A\subseteq K$.
The measure $\mu$ is \emph{determined} by $\mathcal{E}$ if
\begin{equation}
 \mu(U) = \sup\{\mu(E)\colon E\in\mathcal{E} \mbox{ and } E\subseteq U\} 
\end{equation}\label{**}
for each open $U\subseteq K$ and finally $\mu$ is \emph{almost-determined} if
\begin{equation}\label{***}
 \mu(U) = \sup\{\mu(E)\colon E\in\mathcal{E} \mbox{ and } \mu(E\setminus U)=0\} 
\end{equation}
for each open $U\subseteq K$. 
In what follows we will consider zerodimensional spaces. Note that in this case the family of clopen subsets approximates $\mu$ and therefore to check if $\mathcal{E}$ approximates $\mu$ it is enough to check if (\ref{*}) holds for all clopen sets $A$.
Similarly, we can check if $\mu$ is (almost) determined by $\mathcal{E}$ by considering (\ref{**}) (and (\ref{***})) only for clopen sets $U$. 

A measure $\mu$ is
of \emph{Maharam type $\kappa$} if $\kappa$ is the minimal size of a family approximating $\mu$. We say that $\mu$ is \emph{separable} if it is of Maharam type at most $\omega$. Notice that a measure $\mu$ is non-separable if there is a family
$\{A_\alpha\colon \alpha<\omega_1\}$ and $\varepsilon>0$ such that $\mu(A_\alpha \bigtriangleup A_\beta)>\varepsilon$ for each $\alpha\ne \beta$.

Note that by saying that $K$ \emph{carries} a measure $\mu$ we do not necessarily assume that $\mu$ is strictly positive on $K$, i.e. that $\mu(U)>0$ for every non-empty open $U\subseteq K$. If $\mu$ is strictly positive on $K$, then we say that $K$ \emph{supports} $\mu$.

We say that
$\omega_1$ is \emph{a precaliber} of a Boolean algebra $\mathfrak{A}$ if for every uncountable family of nonzero elements of $\mathfrak{A}$ there is an uncountable subfamily which is centered, i.e. each of its finite subfamilies has nonempty
intersection. For a cardinal number $\kappa$ let $\lambda_{\kappa}$ be the standard measure on $2^{\kappa}$. 
If $\mathcal{I}$ is an ideal of subsets of $K$, then
\[ \mathrm{cov}(\mathcal{I}) = \min\{|\mathcal{A}|\colon \mathcal{A}\subseteq \mathcal{I}, \ \bigcup \mathcal{A} = K\}. \]
By $\mathcal{N}_\kappa$ we will mean the $\sigma$-ideal of $\lambda_\kappa$-null sets and $\mathcal{N} = \mathcal{N}_\omega$.

\begin{thm}[see e.g. \cite{Kunen-vanMill}] \label{kunen-vanmill}
	The following are equivalent:
	\begin{enumerate}
		\item $\mathsf{MA}_{\omega_1}(\mathrm{ma})$ holds,
		\item $\mathrm{cov}(\mathcal{N}_{\omega_1})>\omega_1$,
		\item $\omega_1$ is a precaliber of measure algebras.
	\end{enumerate}
\end{thm}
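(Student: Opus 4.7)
The plan is to prove the cycle (1) $\Rightarrow$ (3) $\Rightarrow$ (2) $\Rightarrow$ (1). The first implication is the easiest and conceptually cleanest: given $\{a_\alpha : \alpha < \omega_1\}$ nonzero in a measure algebra $\mathfrak{A}$, the sets $D_\alpha = \{p \in \mathfrak{A}^+ : \exists \beta \geq \alpha, \ p \leq a_\beta\}$ are dense open, so a filter $G$ produced by $\mathsf{MA}_{\omega_1}(\mathrm{ma})$ meeting each $D_\alpha$ yields $\omega_1$ many indices $\beta(\alpha)$ with $a_{\beta(\alpha)}$ compatible in $G$; this is the required uncountable centered subfamily.

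For (3) $\Rightarrow$ (2): suppose toward contradiction that $\{N_\alpha : \alpha < \omega_1\}$ covers $2^{\omega_1}$ by null sets. Enlarge each $N_\alpha$ to a Borel $B_\alpha$ of measure $< 1/2$ and set $C_\alpha = 2^{\omega_1} \setminus B_\alpha \in \mathrm{Borel}(2^{\omega_1}) / \mathcal{N}_{\omega_1}$, of measure $> 1/2$. Precaliber $\omega_1$ of this measure algebra gives an uncountable $J \subseteq \omega_1$ with $\{C_\alpha : \alpha \in J\}$ centered, i.e.\ every finite union $\bigcup_{\alpha \in F} B_\alpha$ has measure strictly less than $1$. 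One then leverages that $J$ is uncountable while each $B_\alpha$ depends essentially on countably many coordinates: apply a $\Delta$-system reduction to the countable supports of the $B_\alpha$'s, and combine with the fact that $\{B_\alpha : \alpha \in J\}$ must cover the complement of any fixed $C_\alpha$ (since the original family covered), to derive a finite subfamily whose union has measure $1$, contradicting centeredness.

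For (2) $\Rightarrow$ (1): let $\mathfrak{A}$ be a measure algebra and $\{D_\alpha : \alpha < \omega_1\}$ dense open subsets of $\mathfrak{A}^+$. By Maharam's classification, every measure algebra embeds into a product of algebras of the form $\mathrm{Borel}(2^\kappa)/\mathcal{N}_\kappa$, and the argument reduces to $\kappa = \omega_1$. Construct the desired filter by a transfinite diagonalization: at stage $\alpha$, the ``bad'' conditions failing to refine earlier choices while meeting $D_\alpha$ form a null set in an appropriate representation; $\mathrm{cov}(\mathcal{N}_{\omega_1}) > \omega_1$ ensures that not all conditions are bad, so a suitable $a_\alpha \in D_\alpha$ can be chosen. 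Coding the whole construction inside $2^{\omega_1}$ and interpreting the covering hypothesis as the existence of a random point avoiding a prescribed family of null sets produces the desired filter.

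The main obstacle will be (3) $\Rightarrow$ (2): the naive attempt to use the cover directly fails because an uncountable centered subfamily of measure-algebra elements need not have common points (as witnessed by $\{[x(\alpha) = 0] : \alpha < \omega_1\}$, which is centered yet has empty intersection). The $\Delta$-system refinement on coordinate supports is what repairs this, since within a ``root'' of countable support one genuinely works inside a standard Lebesgue space where countable centeredness does produce a positive-measure intersection, and the cover assumption can then be converted into a contradiction on the root.
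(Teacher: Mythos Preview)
The paper does not prove this theorem; it is quoted as a known result with a reference to Kunen--van Mill, so there is no ``paper's proof'' to compare against. That said, your sketch has real problems in two of the three implications.

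For (1) $\Rightarrow$ (3): your sets $D_\alpha = \{p : \exists\,\beta\geq\alpha,\ p\leq a_\beta\}$ are \emph{not} dense in general. Take the trivial family $a_\alpha = a$ for a fixed element with $0 < a < 1$; then every $D_\alpha$ is just $\{p : p\leq a\}$, and $1-a$ has no refinement there. The standard fix is to first observe that the decreasing sequence $b_\alpha = \sup_{\beta\geq\alpha} a_\beta$ stabilizes (by ccc of the measure algebra) at some $c>0$, and then run your argument inside $\mathfrak{A}\restriction c$, where the $D_\alpha$ \emph{are} dense. This is a routine repair, but as written the claim is false.

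For (3) $\Rightarrow$ (2): this is where the argument breaks down seriously. If $N_\alpha$ is null and $B_\alpha\supseteq N_\alpha$ is Borel null (or even of small positive measure), then $[C_\alpha] = [2^{\omega_1}\setminus B_\alpha]$ has measure $1$ (or $>1/2$) in the measure algebra, so the family $\{[C_\alpha]\}$ is already centered---precaliber gives you nothing new. Your subsequent claim that $\{B_\alpha:\alpha\in J\}$ ``must cover the complement of any fixed $C_\alpha$'' is unjustified: $J$ is a proper subfamily of $\omega_1$, and there is no reason the corresponding null sets still cover anything. The $\Delta$-system step is too vague to evaluate, and I do not see how it produces a finite subfamily with union of full measure. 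The genuine difficulty here is exactly the one you flag at the end (centered families in $\mathfrak{M}_{\omega_1}$ need not have points in common), but the repair you propose does not address it. You should consult the cited reference for the actual argument, which proceeds quite differently.

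Your sketch of (2) $\Rightarrow$ (1) via random-real--style diagonalization is on the right track: one reduces to $\mathfrak{M}_{\omega_1}$, replaces each dense open $D_\alpha$ by a countable maximal antichain with Borel representatives, and uses $\mathrm{cov}(\mathcal{N}_{\omega_1})>\omega_1$ to find a point avoiding the $\omega_1$ many null ``error'' sets arising from finite Boolean combinations of these representatives. The details need to be written out, but the idea is sound.
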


The following theorem was mentioned in the introduction.

\begin{thm}[Fremlin, \cite{Fremlin97}] \label{Fremlin}
	$\mathsf{MA}_{\omega_1}$ implies that $K$ can be mapped continuously onto $[0,1]^{\omega_1}$ iff $K$ carries a non-separable measure.
\end{thm}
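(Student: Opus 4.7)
The plan splits into the easy direction ($A(\omega_1)\Rightarrow M(\omega_1)$, a pullback) and the hard direction ($M(\omega_1)\Rightarrow A(\omega_1)$, where $\MA_{\omega_1}$ is used substantively).

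For the easy direction, given a continuous surjection $\pi\colon K\twoheadrightarrow [0,1]^{\omega_1}$, I would pull back Lebesgue measure via Hahn--Banach. The functional $g\circ\pi\mapsto \int g\,d\lambda_{\omega_1}$ is well-defined and of norm $1$ on the subspace $\pi^{*}C([0,1]^{\omega_1})\sub C(K)$; any norm-preserving extension $\Lambda$ to $C(K)$ still satisfies $\Lambda(1)=1=\|\Lambda\|$ and is therefore positive, so by Riesz it represents a Radon probability $\mu$ on $K$ with $\pi_{*}\mu=\lambda_{\omega_1}$. The pullback $B\mapsto \pi^{-1}[B]$ then embeds $\mathfrak{B}(\lambda_{\omega_1})$ isometrically into $\mathfrak{B}(\mu)$, so the Maharam type of $\mu$ is at least $\omega_1$.

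For the hard direction, let $\mu$ be a Radon probability on $K$ of Maharam type $\geq\omega_1$. My goal is to build continuous $f_\xi\colon K\to[0,1]$ for $\xi<\omega_1$ whose product $F=(f_\xi)_\xi\colon K\to [0,1]^{\omega_1}$ pushes $\mu$ to a measure of full support; then $F(K)$ is compact, contains $\supp F_{*}\mu=[0,1]^{\omega_1}$, and thus equals $[0,1]^{\omega_1}$, so $F$ is the desired surjection. Full support amounts to
\[ \mu\Big(\bigcap_{\xi\in F_0}f_\xi^{-1}[U_\xi]\Big)>0 \]
for every finite $F_0\sub \omega_1$ and every choice of non-empty open $U_\xi\sub [0,1]$. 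Maharam's theorem already provides a measure-preserving embedding of $\mathfrak{B}(\lambda_{\omega_1})$ into the non-separable part of $\mathfrak{B}(\mu)$ and hence \emph{measurable} $g_\xi\colon K\to[0,1]$ that are jointly independent with uniform marginals; the existence of such a family in the measurable category is thus a $\ZFC$ theorem, and the entire remaining issue is to replace these $g_\xi$ by continuous $f_\xi$ without destroying the full-support inequality above.

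I expect this promotion step to be the main obstacle, and to be the place where $\MA_{\omega_1}$ is essential. The natural strategy is to apply Martin's axiom to a ccc poset whose conditions are finite coherent tuples $(f_{\xi_1},\dots,f_{\xi_n})$ of continuous approximations, each $f_{\xi_i}$ a Tietze extension of the restriction of $g_{\xi_i}$ to a compact set of controlled measure on which $g_{\xi_i}$ is continuous (via Lusin), together with a quantitative guarantee that the tuple's joint distribution is within a prescribed tolerance of uniform on $[0,1]^{n}$ in total variation. The dense sets demand that every $\xi<\omega_1$ eventually appears in some condition and that the full-support inequality is forced for every finite $F_0$ and every rational basic box. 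The ccc verification---ultimately using the non-separability of $\mu$ to rule out uncountable antichains---is precisely the gap exploited by the consistent $\ZFC$ examples of $M(\omega_1)$ without $A(\omega_1)$ mentioned in the introduction, and is what forces the hypothesis to be full $\MA_{\omega_1}$ rather than the weaker $\MA_{\omega_1}(\mathrm{ma})$ used elsewhere in the paper.
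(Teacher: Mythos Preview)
The paper does not prove this theorem; it is stated with attribution to Fremlin \cite{Fremlin97} and immediately used as a black box in the sentence that follows it. There is therefore no proof in the paper to compare your attempt against.

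On the merits of your sketch: the easy direction is correct and cleanly done. For the hard direction, what you have written is, as you yourself acknowledge, a plan rather than a proof---you identify the ccc verification as ``the gap,'' and indeed that is the entire substantive content of the theorem. You have not specified your poset precisely enough to carry out that verification, and it is not routine; this is exactly the place where full $\MA_{\omega_1}$ is needed rather than $\MA_{\omega_1}(\mathrm{ma})$, since (as the paper notes after Theorem~\ref{measure fiber}) the latter is consistent with the failure of the conclusion. I would also remark that the function-valued Lusin--Tietze poset you describe is not the standard route: Fremlin's argument and its usual presentations work instead with closed-set approximations $F_\alpha^0\subseteq B_\alpha$, $F_\alpha^1\subseteq K\setminus B_\alpha$ to a measure-independent family $(B_\alpha)_{\alpha<\omega_1}$ obtained from Maharam's theorem, and apply $\MA_{\omega_1}$ to a poset whose conditions are finite index sets on which these closed pairs are already topologically independent; a generic filter then gives an uncountable independent family of pairs of disjoint closed sets and hence a continuous surjection onto $2^{\omega_1}$. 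Whether your alternative poset can be made ccc is not clear from what you have written.
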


Theorem \ref{Fremlin} and Theorem \ref{big fibered} implies that under $\mathsf{MA}_{\omega_1}$ if $K$ is a compact space carrying a non-separable measure and $f\colon K \to 2^\omega$ is continuous, then $f^{-1}(t)$ carries a non-separable measure
for some $t\in 2^\omega$. This statement cannot be proved in $\mathsf{ZFC}$: under $\mathrm{cov}(\mathcal{N})=\omega_1$ there is a metrizably-fibered compact space supporting a non-separable measure (see Example \ref{kunen}). However, it can be
generalized in the following way:

\begin{thm}\label{measure fiber} Assume $\omega_1$ is a precaliber of measure algebras. Suppose that $\mu$ is a measure on a compact zerodimensional space $K$, $\mu$ is non-separable and $f\colon K\to 2^\omega$ is a
	continuous mapping. Then there is $t\in 2^\omega$ such that $f^{-1}(t)$ carries a non-separable measure.
\end{thm}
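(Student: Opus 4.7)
The plan is to mirror the bisection argument of Fact~\ref{big fibered}, replacing ``non-empty'' by ``positive measure''. The goal is to produce $t\in 2^\omega$ such that $f^{-1}(t)$ continuously surjects onto $2^{\omega_1}$ via a free clopen family; pulling back $\lambda_{\omega_1}$ then yields a non-separable measure on $f^{-1}(t)$.

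First, from the non-separability of $\mu$ I fix an uncountable $\varepsilon$-separated family $\{A_\alpha\}_{\alpha<\omega_1}$ of clopen subsets of $K$, and by a pigeonhole on $\mu(A_\alpha)$ may assume both $\mu(A_\alpha)$ and $\mu(K\setminus A_\alpha)$ are bounded below by some $c>0$ on an uncountable subfamily. The crucial preliminary reduction, and the place where the precaliber hypothesis essentially enters, is to pass to an uncountable $J\subseteq\omega_1$ along which the family is measure-theoretically free: \[\mu\!\Bigl(\bigcap_{\alpha\in F}A_\alpha^{e(\alpha)}\Bigr)>0\] for every finite $F\subseteq J$ and every $e\colon F\to\{0,1\}$, where $A^0=A$ and $A^1=K\setminus A$. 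This is obtained by combining Maharam's classification theorem (which yields an abstract $\omega_1$-free family inside the non-separable part of the measure algebra) with the precaliber of measure algebras, which allows the abstract free family to be realised by clopen sets coherently across an uncountable index.

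With $J$ in hand, I would prove the analogue of the claim inside Fact~\ref{big fibered}: \emph{if $F\subseteq 2^\omega$ is closed and $\alpha<\omega_1$ is such that $\mu\!\bigl(\bigcap_{i\in I}A_i^{e(i)}\cap f^{-1}[F]\bigr)>0$ for every finite $I\subseteq J\cap[\alpha,\omega_1)$ and every $e\colon I\to\{0,1\}$, then some closed $F'\subseteq F$ with $\mathrm{diam}(F')\leq\mathrm{diam}(F)/2$ and some $\beta\geq\alpha$ enjoy the same property.} The proof is the same bisection: split $F=F_0\cup F_1$ with halved diameter; if $F_0$ fails with a witness $(I_0,e_0)$, choose $\beta>\max I_0$, so any witness $(I_1,e_1)$ of failure for $F_1$ with $I_1\subseteq[\beta,\omega_1)$ is disjoint from $I_0$; then subadditivity of $\mu$ gives $\mu\bigl(\bigcap A^{e_0\cup e_1}\cap f^{-1}[F]\bigr)\leq 0+0$, contradicting the hypothesis on $F$.

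Iterating yields nested closed $F_n\subseteq 2^\omega$ with diameters tending to $0$ and ordinals $\alpha_n\nearrow\alpha_\infty<\omega_1$. Setting $\{t\}=\bigcap_n F_n$, for every finite $I\subseteq J\cap[\alpha_\infty,\omega_1)$ and every $e$ the sets $\bigcap A_\alpha^{e(\alpha)}\cap f^{-1}[F_n]$ form a decreasing chain of nonempty compacta, so their intersection $\bigcap A_\alpha^{e(\alpha)}\cap f^{-1}(t)$ is nonempty. Hence $x\mapsto(\chi_{A_\alpha}(x))_{\alpha\in J\cap[\alpha_\infty,\omega_1)}$ is a continuous surjection $f^{-1}(t)\to 2^{\omega_1}$, and the pullback of $\lambda_{\omega_1}$ is a non-separable measure on $f^{-1}(t)$. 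The main obstacle will be the measure-freeness reduction: under the weaker assumption $\mathsf{MA}_{\omega_1}(\mathrm{ma})$ rather than $\mathsf{MA}_{\omega_1}$, producing the uncountable $J$ on which the clopen family is $\mu$-independent is the delicate step, and Example~\ref{kunen} under $\mathrm{cov}(\mathcal{N})=\omega_1$ shows that some such hypothesis is genuinely required.
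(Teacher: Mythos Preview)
Your proposal has a genuine gap, and in fact the strategy proves too much. Your ``crucial preliminary reduction'' asks for an uncountable $J$ and clopen sets $A_\alpha$ ($\alpha\in J$) with
\[
\mu\Bigl(\bigcap_{\alpha\in F}A_\alpha^{e(\alpha)}\Bigr)>0\quad\text{for every finite }F\subseteq J\text{ and }e\colon F\to\{0,1\}.
\]
But $\mu$-positivity implies non-emptiness, so this would give an uncountable Boolean-independent family of clopens in $K$, and hence a continuous surjection of $K$ itself onto $2^{\omega_1}$, \emph{before you ever look at fibers}. In other words, your reduction alone would prove Fremlin's theorem (Theorem~\ref{Fremlin}) from the weaker hypothesis $\mathsf{MA}_{\omega_1}(\mathrm{ma})$. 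The paper, however, points out right after the statement of Theorem~\ref{measure fiber} that $\mathsf{MA}_{\omega_1}(\mathrm{ma})$ is consistent with the existence of a compact space supporting a non-separable measure that does \emph{not} map onto $[0,1]^{\omega_1}$. So the reduction you describe cannot be carried out under the stated hypothesis; the hand-waved combination of Maharam plus precaliber does not produce clopen independence, only measure-algebra independence of \emph{measurable} sets, and the passage to clopens destroys independence for large finite Boolean combinations.

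The paper's proof is organised precisely to avoid this overreach. It first uses Maharam's theorem (Lemma~\ref{independent}) to obtain measurable $B_\alpha$ with $\mu_\sigma(B_\alpha\bigtriangleup B_\beta)=\tfrac12$ for every cylinder $\sigma$, then approximates each $B_\alpha$ by a clopen $A_\alpha$ with $\mu(A_\alpha\bigtriangleup B_\alpha)<\tfrac18$. The precaliber hypothesis is applied not to the $A_\alpha$ directly but, via a Vitali-covering/disintegration lemma (Proposition~\ref{disintegration} and Lemma~\ref{x}), to the closed $\nu$-positive sets $F_{A_\alpha\triangle B_\alpha}\subseteq 2^\omega$ on which the conditional errors $\mu_{t\mid n}(A_\alpha\bigtriangleup B_\alpha)$ stay below $\tfrac18$. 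This yields a single $t$ and an uncountable $\Lambda$ with $\mu_{t\mid n}(A_\alpha\bigtriangleup A_\beta)>\tfrac14$ for all $\alpha\neq\beta\in\Lambda$ and all large $n$; any weak$^*$ accumulation point $\mu_t$ of the conditionals is then a non-separable measure supported on $f^{-1}(t)$. Note the conclusion is only that the fiber \emph{carries a non-separable measure}, not that it surjects onto $2^{\omega_1}$ --- exactly the right strength given the consistency result above.
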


According to Theorem \ref{kunen-vanmill} the assumption used in the above theorem is weaker than $\mathsf{MA}_{\omega_1}$. There are many models in which it is satisfied but $\mathsf{MA}_{\omega_1}$ is not (e.g. the classical random model). Also, it is
consistent with the existence of a compact space which supports a non-separable measure but which cannot be mapped continuously onto $[0,1]^{\omega_1}$ (see \cite[Theorem 6.2]{Plebanek}). Theorem \ref{measure fiber} implies that such spaces (under
$\mathsf{MA}_{\omega_1}(\mathrm{ma})$) cannot have fibers carrying only separable measures. 

On the other hand, since $\mathrm{cov}(\mathcal{N})\geq \mathrm{cov}(\mathcal{N}_{\omega_1})$ (see \cite[Fact 4.1]{Kraszewski}), the assumption used in Theorem \ref{measure fiber} is stronger than $\mathrm{cov}(\mathcal{N})>\omega_1$. We do not know if it can
be weakened to $\mathrm{cov}(\mathcal{N})>\omega_1$ (see also Example \ref{kunen}) but Example \ref{kunen} shows that $\mathrm{cov}(\mathcal{N})>\omega_1$ is the weakest axiom we can try to use to prove Theorem \ref{measure fiber}. 



Let $\mu$ be a measure on a compact zerodimensional space $K$. Let $f\colon K \to 2^\omega$ be a continuous mapping.
For $\sigma\in 2^{<\omega}$ define
\[ \mu_\sigma(A)=\frac{\mu(A\cap f^{-1}\big[ [\sigma] \big])}{\mu(f^{-1}\big[ [\sigma] \big])}, \]
where $[\sigma] = \{t\in 2^\omega\colon \sigma \subseteq t\}$. 
Recall that a sequence of measures $(\mu_n)$ on a zerodimensional compact $K$ converges to $\mu$ in weak$^*$ topology if $\lim_{n\to\infty} \mu_n(A) = \mu(A)$ for each clopen $A\subseteq K$.
Since the family of probability measures is compact in weak$^*$ topology (by the Banach-Alaoglu Theorem \cite[Chapter II]{Diestel84})
for each $t\in 2^\omega$ we can choose a weak$^*$-accumulation point $\mu_t$ of the set $\{\mu_{t|n}\colon n\in \omega\}$.
Notice that for each $t\in 2^\omega$ we have $\mu_t(f^{-1}(t))=1$.


Denote by $\nu$ the measure given by $\nu(A) = \mu f^{-1}[A]$. 


\begin{prop}\label{disintegration}
	Assume that $\mu$ and $K$ are as above. Let $A\subseteq K$ be such that $\mu(A)<\varepsilon$. Then there is a closed set $F_A \subseteq 2^\omega$ such that $\nu(F_A)>0$ and whenever $t\in F_A$ there is $N_A \in \omega$ such that $\mu_{t|n}(A)<\varepsilon$ for each
	$n>N_A$.
\end{prop}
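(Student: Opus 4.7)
The plan is to recognize $t \mapsto \mu_{t|n}(A)$ as a martingale on $(2^\omega, \nu)$ and apply L\'evy's martingale convergence theorem. First I would introduce an auxiliary Borel measure on $2^\omega$ by $\mu_A(B) := \mu(A \cap f^{-1}[B])$. The inequality $\mu_A(B) \leq \mu(f^{-1}[B]) = \nu(B)$ shows $\mu_A \ll \nu$, so the Radon--Nikodym theorem supplies a Borel density $g_A \colon 2^\omega \to [0,1]$ with $\mu_A(B) = \int_B g_A \, d\nu$ for every Borel $B \subseteq 2^\omega$; in particular $\int g_A \, d\nu = \mu_A(2^\omega) = \mu(A) < \varepsilon$.

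Next, for $\sigma \in 2^n$ and $t \in [\sigma]$ the definition of $\mu_\sigma$ gives
\[ h_n(t) \;:=\; \mu_{t|n}(A) \;=\; \mu_\sigma(A) \;=\; \frac{\mu_A([\sigma])}{\nu([\sigma])} \;=\; \frac{1}{\nu([\sigma])}\int_{[\sigma]} g_A \, d\nu, \]
so $h_n$ is exactly the conditional expectation of $g_A$ with respect to the finite algebra $\mathcal{F}_n$ generated by the $n$-th level cylinders. Consequently $(h_n)_{n\in\omega}$ is a uniformly bounded martingale on $(2^\omega,\nu)$, and since $\sigma(\bigcup_n \mathcal{F}_n)$ is the Borel $\sigma$-algebra of $2^\omega$, L\'evy's theorem yields $h_n \to g_A$ $\nu$-almost everywhere. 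This identification of $h_n$ with $E[g_A \mid \mathcal{F}_n]$ is the one step that needs verification, but it follows at once from the displayed equality above.

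To conclude, Chebyshev's inequality gives
\[ \nu(\{g_A \geq \varepsilon\}) \;\leq\; \varepsilon^{-1}\int g_A\, d\nu \;=\; \varepsilon^{-1}\mu(A) \;<\; 1, \]
so the set $B := \{t \in 2^\omega : g_A(t) < \varepsilon\}$ satisfies $\nu(B) > 0$. After removing from $B$ the $\nu$-null set on which $h_n \not\to g_A$, every remaining $t$ satisfies $h_n(t) \to g_A(t) < \varepsilon$ and hence $\mu_{t|n}(A) < \varepsilon$ for all $n$ larger than some $t$-dependent $N$. Since $\nu$ is a finite Borel measure on the compact metric space $2^\omega$, it is inner regular, so I would finally extract a closed $F_A$ of positive $\nu$-measure inside this set; this $F_A$ has the required property.
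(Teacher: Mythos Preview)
Your proof is correct and takes a genuinely different route from the paper. The paper argues directly via the Vitali covering theorem: it sets $H=\{t:\exists^\infty n\ \mu_{t|n}(A)\ge\varepsilon\}$, observes that the clopens $C$ with $\mu(A\cap f^{-1}[C])\ge\varepsilon\,\nu(C)$ form a Vitali cover of $H$, and derives a contradiction from $\nu(H)=1$ by extracting a disjoint subfamily of total $\nu$-measure exceeding $\mu(A)/\varepsilon$. You instead recognize $h_n(t)=\mu_{t|n}(A)$ as the conditional expectation $E[g_A\mid\mathcal F_n]$ of the Radon--Nikodym derivative $g_A=d\mu_A/d\nu$ and invoke L\'evy's martingale convergence theorem to get $h_n\to g_A$ $\nu$-a.e., then use Chebyshev and inner regularity to finish. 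The two arguments are close cousins under the hood (martingale convergence along the dyadic filtration is essentially a packaged form of the differentiation theorem that Vitali covering delivers), but your version buys a bit more: you identify the limit explicitly as $d\mu_A/d\nu$ rather than working only with $\limsup_n h_n$, which makes the final estimate via Chebyshev transparent. The paper's version, in turn, is more self-contained, avoiding both Radon--Nikodym and martingale machinery. One small point worth noting explicitly is that $\mu_{t|n}$ is undefined when $\nu([t|n])=0$; this is harmless since such $t$ form a $\nu$-null set and all your statements are $\nu$-a.e., but it would not hurt to say so.
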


\begin{proof}
	Assume  $\mu(A)=r<\varepsilon$. For $n\in \omega$ let $g_n\colon 2^\omega \to [0,1]$ be defined by
	\[ g_n(t) = \mu_{t|n}(A). \]
	Clearly, $g_n$ is measurable for each $n$ and so $g = \limsup_n g_n$ is measurable, too. Hence,
	\[ H = \{t\in 2^\omega\colon \exists^\infty n  \ \mu_{t|n}(A)\geq \varepsilon\} = g^{-1}[\varepsilon,1] \]
	is measurable. We are going to show that $\nu(H)<1$. Then, any closed $\nu$-positive set $F_A$ such that $F_A \subseteq 2^\omega \setminus H$ would be as desired. Indeed, if $t\notin H$, then we can find $N_A\in \omega$ such that
	$\mu_{t|n}(A)<\varepsilon$ for each $n>N_A$.

	So, suppose towards contradiction that $\nu(H)=1$. Let $\mathcal{C}$ be the family of all clopen sets $C\subseteq 2^\omega$ such that $\mu(A \cap f^{-1}[C])\geq \varepsilon \nu(C)$ whenever $C\in \mathcal{C}$. 
	Notice that $\mathcal{C}$ is a Vitali covering of $H$, i.e. for each $t\in H$ and each
	$\delta>0$, there is $C\in \mathcal{C}$ such that $t\in C$ and $C$ is a ball of radius at most $\delta$. By Vitali Covering Theorem (see e.g. \cite[261B]{Fremlin-MT2}) there is a countable family $\mathcal{C}'\subseteq \mathcal{C}$ of pairwise disjoint sets such that $\nu(H \setminus
\bigcup \mathcal{C}')=0$. 

As $\nu(H)=1$, there is a finite family $\mathcal{F}\subseteq \mathcal{C}'$ such that $\nu(\bigcup \mathcal{F})>r/\varepsilon$. 
Since $\mathcal{C}$ is closed under finite unions of disjoint sets, $C = \bigcup \mathcal{F}\in \mathcal{C}$. 
But then
	\[ \mu(A) \geq \mu(A \cap f^{-1}[C])\geq \varepsilon\cdot \nu(C)>r, \]
	a contradiction.
\end{proof}



	
\begin{lem} \label{x}
	Assume $\omega_1$ is a precaliber of measure algebras. Let $K$ and $\mu$ be as above, $\varepsilon>0$ and let $\mathcal{A}$ be a family of subsets of $K$ such that $\mu(A)<\varepsilon$ for each $A\in\mathcal{A}$. 
Then, there is $t\in 2^\omega$, $N\in \omega$ and an uncountable family $\mathcal{A}'\subseteq \mathcal{A}$ such that $\mu_{t|n}(A)<\varepsilon$ for every $n>N$ and $A\in\mathcal{A}'$.
\end{lem}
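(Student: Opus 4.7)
The plan is to apply Proposition \ref{disintegration} to each $A\in\mathcal{A}$ individually and then use the precaliber hypothesis to find a single $t\in 2^\omega$ that works for uncountably many $A$'s simultaneously, with the last bit of synchronization done by pigeonhole.

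First, for every $A\in\mathcal{A}$ Proposition \ref{disintegration} produces a closed set $F_A\subseteq 2^\omega$ with $\nu(F_A)>0$ such that every $t\in F_A$ admits some $N(t,A)\in\omega$ satisfying $\mu_{t|n}(A)<\varepsilon$ for all $n>N(t,A)$. Viewing the $F_A$ as nonzero elements of the measure algebra of $(2^\omega,\nu)$, I invoke clause (3) of Theorem \ref{kunen-vanmill} to extract an uncountable $\mathcal{A}'\subseteq\mathcal{A}$ for which $\{F_A:A\in\mathcal{A}'\}$ is centered, i.e., every finite subcollection has intersection of positive $\nu$-measure and in particular is nonempty as a set. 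Since the $F_A$ are closed subsets of the compact space $2^\omega$, the finite intersection property upgrades to $\bigcap_{A\in\mathcal{A}'}F_A\neq\emptyset$, and I pick any point $t$ in this intersection. At that point, for every $A\in\mathcal{A}'$ the integer $N(t,A)\in\omega$ is defined, so partitioning $\mathcal{A}'=\bigcup_{k\in\omega}\{A:N(t,A)\le k\}$ and pigeonholing yields $N\in\omega$ such that $\mathcal{A}'':=\{A\in\mathcal{A}':N(t,A)\le N\}$ is uncountable; the triple $(t,N,\mathcal{A}'')$ witnesses the conclusion.

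The one genuine point to watch is the passage from centeredness in the measure algebra of $\nu$ to a nonempty set-theoretic intersection. This is legitimate because the centeredness supplied by Theorem \ref{kunen-vanmill}(3) means that the meets in the algebra are nonzero, which translates to the finite intersections having strictly positive $\nu$-measure and so being nonempty as subsets of $2^\omega$; closedness of the $F_A$ and compactness of $2^\omega$ then give the full intersection. Everything else is a clean combination of Proposition \ref{disintegration}, the precaliber assumption, and pigeonhole, so I do not expect any further obstacle.
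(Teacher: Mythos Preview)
Your proof is correct and follows essentially the same route as the paper: apply Proposition \ref{disintegration} to each $A$, use the precaliber hypothesis on the $F_A$'s in the measure algebra of $\nu$ to get a centered uncountable subfamily, pass to a common point $t$ by compactness, and pigeonhole on the thresholds $N$. The only difference is the order of the last two steps: the paper pigeonholes on $N_A$ \emph{before} invoking the precaliber and selecting $t$, whereas you do it afterwards; since the $N_A$ produced by Proposition \ref{disintegration} actually depends on the chosen $t\in F_A$, your ordering is the more careful one.
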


\begin{proof}
	For $A\in \mathcal{A}$ let $F_A$ and $N_A$ be as in Lemma \ref{disintegration}. Without loss of generality, refining $\mathcal{A}$ if needed, we can assume that $N_A = N$ for some $N$ and each $A\in \mathcal{A}$.
Since $\omega_1$ is a precaliber of measure algebras, there is an uncountable $\mathcal{A}'\subseteq \mathcal{A}$ such that $\{F_A\colon A\in\mathcal{A}'\}$ is centered. By compactness, there is $t\in \bigcap_{A\in\mathcal{A}'} F_A$. Clearly,
$\mu_{t|n}(A)<\varepsilon$ for each $A \in \mathcal{A}'$ and $n>N$. 
\end{proof}

Recall that a set $D\subseteq 2^\kappa$ \emph{depends} on a set $I\subseteq \kappa$ if $x\in D \iff x_{|I} \in D_{|I}$ for each $x\in 2^\kappa$. A closed set $F\subseteq 2^\kappa$ is a \emph{zero set} if it depends on countably many coordinates.
For a measure $\mu$ defined on a space $K$, denote by $\mathfrak{M}(\mu)$ its measure algebra, i.e. $\mathfrak{M}(\mu) = \mathrm{Bor}(K)_{/ \mu=0}$.
Let $\mathfrak{M} = \mathfrak{M}(\lambda_{\omega})$. A function $\varphi\colon \mathfrak{M}(\mu)\to \mathfrak{M}(\nu)$ is a \emph{measure-isomorphism} if it is a Boolean isomorphism and $\nu(\varphi(A)) =\mu(A)$ for each $A\in \mathfrak{M}(\mu)$.
A measure $\mu$ is \emph{homogeneous} if $\mathfrak{M}(\mu) = \mathfrak{M}(\mu_{| A})$ for every $\mu$-positive $A\subseteq K$. 

It will be convenient to formulate the following corollary of Maharam's theorem:

\begin{thm}[see e.g. \cite{Fremlin-MA}[Theorem 3.9, Theorem 3.10]\label{maharam}
If $\mu$ is homogeneous and of Maharam type $\kappa$, then there is a measure-isomorphism 
$\varphi\colon \mathfrak{M}(\mu) \to \mathfrak{M}(\lambda_{\kappa})$. Moreover, if $K$ carries a measure $\mu$ of Maharam type $\kappa$, then there if a closed $F\subseteq K$  such that $\mu_{| F}$ is a homogeneous measure of Maharam type $\kappa$.
\end{thm}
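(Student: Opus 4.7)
This is the classical Maharam classification theorem, and I would outline the standard argument based on constructing an independent family of generators.

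For the first assertion, fix $\mu$ homogeneous of Maharam type $\kappa$. The plan is to build by transfinite recursion an independent family $\{e_\alpha : \alpha < \kappa\}\subseteq \mathfrak{M}(\mu)$, each of measure $1/2$, whose generated Boolean subalgebra is metrically dense in $\mathfrak{M}(\mu)$. Once this is done, the map $e_\alpha \mapsto [\{x\in 2^\kappa : x(\alpha)=1\}]$ extends uniquely to a measure-isomorphism $\varphi\colon \mathfrak{M}(\mu) \to \mathfrak{M}(\lambda_\kappa)$, because the dyadic algebra is dense in $\mathfrak{M}(\lambda_\kappa)$ and any measure-preserving map between dense subalgebras extends canonically to the metric completions. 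At a successor stage $\alpha$, having built $\{e_\beta : \beta < \alpha\}$, I produce $e_\alpha$ that exactly halves every atom $b$ of the finite subalgebra generated by an arbitrary finite $F\subseteq \{e_\beta : \beta<\alpha\}$; homogeneity gives $\mathfrak{M}(\mu_{|b})\cong \mathfrak{M}(\mu)$ for every positive $b$, so each such halving is available locally, and a standard exhaustion glues these local halvings into one global $e_\alpha$ of measure $1/2$ that is independent of all previous $e_\beta$. At limit stages one simply takes the union of what has been built.

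The main obstacle is to verify that the construction actually runs to length $\kappa$ and that the generated subalgebra is dense at the end. If the subalgebra generated by $\{e_\beta : \beta < \alpha\}$ were already dense for some $\alpha < \kappa$, the Maharam type would drop below $\kappa$, contradicting minimality. Conversely, if density failed after stage $\kappa$, some $a\in \mathfrak{M}(\mu)$ would not be approximable by finite Boolean combinations of the $e_\alpha$; relativising to a suitable witness and invoking homogeneity would produce yet another independent element, contradicting the supposed termination. Carrying these exhaustion arguments out carefully inside the measure algebra, in a way that respects the countable chain condition, is the technical heart of the proof.

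For the ``moreover'' clause, the plan is to apply Zorn's Lemma to pick a maximal family $\{A_i : i\in I\}$ of pairwise disjoint $\mu$-positive measurable sets on which $\mu$ restricts homogeneously. Every measure algebra contains such a positive homogeneous piece (obtained by choosing $b$ minimising the Maharam type of $\mu_{|b}$ among positive elements), so maximality forces $\bigcup_i A_i = K$ modulo null sets. The Maharam type of $\mu$ is then the supremum of the Maharam types $\kappa_i$ of the pieces $\mu_{|A_i}$ together with a countable contribution from the indexing, so some $\kappa_i=\kappa$; inner regularity of $\mu$ lets me shrink that $A_i$ to a closed set $F$ of positive measure without affecting homogeneity of the restriction, yielding the required $F$.
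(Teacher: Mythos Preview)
The paper does not prove this theorem; it is quoted with a reference to Fremlin and used as a black box in the subsequent arguments. Your outline is the standard proof one finds in that reference (transfinite construction of an independent generating family via Maharam's lemma for the first assertion, and the homogeneous decomposition together with inner regularity for the second), so there is nothing in the paper to compare against.

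One small gap in your argument for the ``moreover'' clause: from $\kappa=\sup_i\kappa_i$ over a countable index set you conclude that some $\kappa_i=\kappa$, but this fails when $\cf(\kappa)=\omega$ (consider pieces of types $\aleph_0,\aleph_1,\aleph_2,\dots$ giving a measure of type $\aleph_\omega$ with no homogeneous piece of that type). The paper only ever applies the theorem to pass from a non-separable measure to a homogeneous non-separable restriction, i.e.\ effectively the case $\kappa=\omega_1$, where your argument is fine; but as a general statement both the theorem as written and your proof need the extra hypothesis $\cf(\kappa)>\omega$.
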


\begin{lem}\label{independent} Assume $\mu$ is a non-separable measure on $K$ and let $\mathcal{S}$ be a countable family of measurable subsets of $K$. There is a family $\{B_\alpha\colon \alpha<\omega_1\}$ of measurable subsets of $K$ such that \[ \mu\big(S \cap (B_\alpha \bigtriangleup
	B_\beta)\big) = \frac{1}{2}\mu(S) \]
for every $S\in \mathcal{S}$ and $\alpha<\beta<\omega_1$.
\end{lem}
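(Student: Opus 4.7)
The plan is to use Maharam's theorem to transfer the problem onto a standard product $(2^\kappa,\lambda_\kappa)$ with $\kappa\geq\omega_1$, and then exploit the fact that any countable family of Borel subsets of such a product depends on only countably many coordinates, leaving uncountably many ``free'' coordinates to play the role of the $B_\alpha$.

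Concretely, since $\mu$ is non-separable, Theorem~\ref{maharam} furnishes a closed $F\subseteq K$ with $\mu(F)>0$ on which $\mu_{|F}$ is homogeneous of some Maharam type $\kappa\geq\omega_1$, together with a measure-isomorphism $\varphi\colon\mathfrak{M}(\mu_{|F})\to\mathfrak{M}(\lambda_\kappa)$. Every element of $\mathfrak{M}(\lambda_\kappa)$ is represented by a Borel subset of $2^\kappa$ depending on only countably many coordinates, so the union of the supports of the $\varphi([S\cap F])$, $S\in\mathcal{S}$, is still a countable set $I\subseteq\kappa$ on which every $\varphi([S\cap F])$ depends. Since $|\kappa\setminus I|\geq\omega_1$, I pick distinct $\{\alpha_\xi:\xi<\omega_1\}\subseteq\kappa\setminus I$ and set $C_\xi=\{x\in 2^\kappa:x(\alpha_\xi)=0\}$. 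For $\xi\neq\eta$ the set $C_\xi\bigtriangleup C_\eta$ has $\lambda_\kappa$-measure $1/2$ and depends only on the two coordinates $\alpha_\xi,\alpha_\eta$ (both outside $I$), hence is stochastically independent of every $\varphi([S\cap F])$, giving
\[
\lambda_\kappa\bigl(\varphi([S\cap F])\cap(C_\xi\bigtriangleup C_\eta)\bigr)=\tfrac{1}{2}\lambda_\kappa\bigl(\varphi([S\cap F])\bigr)=\tfrac{1}{2}\mu(S\cap F).
\]
Pulling back through $\varphi^{-1}$ produces $B'_\xi\subseteq F$ with $\mu\bigl(S\cap F\cap(B'_\xi\bigtriangleup B'_\eta)\bigr)=\tfrac{1}{2}\mu(S\cap F)$ for every $S\in\mathcal{S}$ and all $\xi\neq\eta$.

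The main obstacle is promoting $\mu(S\cap F)/2$ to $\mu(S)/2$ in the conclusion, i.e.\ controlling what the $B_\alpha$ do on $K\setminus F$. My plan is to run the coordinate construction simultaneously on every homogeneous component of Maharam type at least $\omega_1$ appearing in the full Maharam decomposition of $\mathfrak{M}(\mu)$, indexing the free coordinates coherently by $\xi<\omega_1$ across the components so the pulled-back pieces glue into a single $B_\xi\subseteq K$. The delicate point — which is where the non-separability hypothesis on $\mu$ really does its work — is ensuring that the non-separable homogeneous components together account for all of the $\mu$-mass that the family $\mathcal{S}$ sees, so that summing the component-wise identities produces $\tfrac{1}{2}\mu(S)$ rather than only $\tfrac{1}{2}\mu(S\cap F)$. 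This global bookkeeping, rather than the coordinate argument on a single homogeneous piece, is the step requiring the most care.
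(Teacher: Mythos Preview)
Your single-component argument is exactly what the paper does: pass via Theorem~\ref{maharam} to a closed $F\subseteq K$ on which $\mu_{|F}$ is homogeneous of type $\kappa\ge\omega_1$, transport $\mathcal S$ through a measure-isomorphism $\varphi$ into $\mathfrak M(\lambda_\kappa)$, collect a countable set $I\subseteq\kappa$ of coordinates on which the images $\varphi(S\cap F)$ depend (the paper obtains $I$ via inner regularity with respect to zero sets rather than by quoting that Baire sets have countable support, but the content is the same), and pull back $\omega_1$ coordinate half-spaces indexed outside $I$.

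The paper, however, does \emph{not} carry out the promotion step you worry about. It simply writes ``we may assume \dots\ that $\mu$ is non-separable and homogeneous'' and from then on works with the restricted measure; the identity it actually proves is $\mu\big(S\cap F\cap(B_\alpha\bigtriangleup B_\beta)\big)=\tfrac12\mu(S\cap F)$. Your instinct that something is missing is correct, and in fact the lemma as literally stated is false. Take $K=2^{\omega_1}$, fix $p\in K$, set $\mu=\tfrac12\delta_p+\tfrac12\lambda_{\omega_1}$ and $\mathcal S=\{\{p\}\}$. Then $\mu$ is non-separable and $\mu(\{p\})=\tfrac12$, but for any measurable $B_\alpha,B_\beta$ the set $\{p\}\cap(B_\alpha\bigtriangleup B_\beta)$ is either $\{p\}$ or $\emptyset$, so its $\mu$-measure lies in $\{0,\tfrac12\}$ and is never $\tfrac14$. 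The ``delicate point'' you flagged is precisely where this fails: non-separability of $\mu$ does not force the separable part of the Maharam decomposition to be null, so gluing across the non-separable components cannot in general recover all of $\mu(S)$.

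What the paper actually establishes --- and what it actually needs --- is the homogeneous case. In the proof of Theorem~\ref{measure fiber} the lemma is applied with $\mathcal S=\{f^{-1}[[\sigma]]:\sigma\in 2^{<\omega}\}$, and one can equally well first replace $(K,\mu)$ by $(F,\mu_{|F})$ and run the entire argument there (the measures $\mu_\sigma$ get replaced by $(\mu_{|F})_\sigma$, and the resulting weak$^*$ accumulation point is still supported on a fiber of $f$). So drop the promotion paragraph: rather than trying to repair the general statement, note that the reduction to homogeneous $\mu$ is harmless for the intended application.
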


\begin{proof}
	First, note that according to Theorem \ref{maharam} we may assume (considering a closed subspace of $K$ instead of $K$, if needed) that $\mu$ is non-separable and homogeneous and so there is a measure-preserving isomorphism $\varphi\colon \mathfrak{M}(\mu) \to \mathfrak{M}(\lambda_\kappa)$ for $\kappa\geq
	\omega_1$.

Let $D\subseteq 2^{\omega_1}$ be measurable. Notice that there is a countable $I_D\subseteq \kappa$ such that \[ \lambda_{\kappa}(D \cap E) = \lambda_{\kappa}(D) \cdot \lambda_{\kappa}(E) \]
for every measurable $E\subseteq 2^{\kappa}$ depending on $\kappa\setminus I_D$. Indeed, since $\mu$ is inner regular with respect to zero sets (see \cite[Theorem 416U]{Fremlin-MT4}), for every $n$ there is a set $D_n$ such that $D_n\subseteq D$,
$\lambda_{\kappa}(D\setminus
D_n)<1/n$ and $D_n$ depends on a countable set $I_n$. Let $I_D = \bigcup_n I_n$. 

Denote $C_\alpha = \{x\in 2^{\kappa}\colon x(\alpha)=1\}$ and notice that if $\alpha$, $\beta\notin I_D$, then \[ \lambda_{\kappa}\big(D\cap (C_\alpha \bigtriangleup C_\beta)\big)=\frac{1}{2}\mu(D). \]
So, for a $\mu$-measurable $S\subseteq K$ and for $\alpha$, $\beta\notin I_{\varphi(S)}$ we have
\[ \mu\big(S \cap (\varphi^{-1}(C_\alpha) \bigtriangleup \varphi^{-1}(C_\beta))\big) = \frac{1}{2}\mu(S). \]
	Now, let \[ I=\bigcup \{I_{\varphi(S)}\colon S\in \mathcal{S}\}. \]
	Of course $I\subseteq \kappa$ is countable. Choose $\Lambda\subseteq \kappa\setminus I$ of size $\omega_1$
	and finally let \[ \{B_\alpha\colon \alpha<\omega_1\} = \{\varphi^{-1}(C_\beta)\colon \beta\in \Lambda\}. \]
	\end{proof}

\begin{proof} (of Theorem \ref{measure fiber})
	Apply Lemma \ref{independent} to $\mu$ and $\mathcal{S} = \{f^{-1}\big[ [\sigma] \big]\colon \sigma\in 2^{<\omega}\}$ to obtain appropriate $\{B_\alpha\colon \alpha<\omega_1\}$. Notice that for every $\sigma\in 2^{<\omega}$ we have
	\[ \mu_\sigma (B_\alpha \bigtriangleup B_\beta) = \frac{1}{2}. \]

	For each $\alpha$ find a clopen set $A_\alpha\subseteq K$ such that 
	\[ \mu(A_\alpha \bigtriangleup B_\alpha)<\frac{1}{8}. \]

	By Lemma \ref{x} applied to the family $\{A_\alpha \bigtriangleup B_\alpha\colon \alpha\in \omega_1\}$ there is $t\in 2^\omega$, $N\in \omega$ and an uncountable $\Lambda\subseteq \omega_1$ such that 
	\[ \mu_{t|n}(A_\alpha \bigtriangleup B_\alpha)<\frac{1}{8} \]
	for every $n>N$ and $\alpha\in\Lambda$. 
	
	Then, for each $\alpha\ne \beta\in \Lambda$ and $n>N$
	\[ \mu_{t|n}(A_\alpha \bigtriangleup A_\beta)> \frac{1}{4}. \]

	Let $\mu_t$ be an accumulation point (in weak$^*$ topology) of the set $\{\mu_{t|n}\colon n>N\}$. 
	Since $(A_\alpha\bigtriangleup A_\beta)$ is clopen for each $\alpha$, $\beta<\omega_1$,
	\[ \mu_t(A_\alpha\bigtriangleup A_\beta)\geq \frac{1}{4} \]
	for each $\alpha\ne \beta\in \Lambda$. Therefore, $\mu_t$ is non-separable.
\end{proof}

\section{Spaces with non-scattered fibers}\label{scattered}

In this section we will prove in $\mathsf{ZFC}$ that zerodimensional spaces with scattered fibers do not carry non-separable measures. 
Since there is, consistently, a compact space which supports a non-separable measure and whose fibers are homeomorphic to $2^\omega$ (see Example \ref{kunen}) one cannot hope to
prove a stronger $\mathsf{ZFC}$ result of this sort.

\begin{thm}\label{perfect fiber}
	Assume that a compact zerodimensional space $K$ carries a non-separable measure $\mu$ and $f\colon K\to 2^\omega$ is a continuous mapping. Then there is $t\in 2^\omega$ such that $f^{-1}(t)$ is not scattered.
\end{thm}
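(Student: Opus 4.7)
My strategy is to argue by contradiction: assume $\mu$ is non-separable while every fiber $f^{-1}(t)$ is scattered. The guiding observation is that a measure supported on a scattered compact zerodimensional space must be purely atomic, hence separable; so it suffices to produce a single $t \in 2^\omega$ at which some accumulation point $\mu_t$ of $\{\mu_{t|n}\}$ is non-separable, giving a contradiction via $\mu_t(f^{-1}(t)) = 1$.

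Following the setup of Theorem \ref{measure fiber}, I would first apply Theorem \ref{maharam} to replace $K$ by a closed subspace on which $\mu$ is homogeneous of uncountable Maharam type, and then invoke Lemma \ref{independent} with $\mathcal{S} = \{f^{-1}[[\sigma]] : \sigma \in 2^{<\omega}\}$ to obtain $\{B_\alpha : \alpha < \omega_1\}$ satisfying $\mu_\sigma(B_\alpha \bigtriangleup B_\beta) = \tfrac{1}{2}$ for all $\sigma$ and $\alpha \neq \beta$, and approximate each $B_\alpha$ by a clopen $A_\alpha$ with $\mu(A_\alpha \bigtriangleup B_\alpha) < \varepsilon$ for some small fixed $\varepsilon$. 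Here is where the proof of Theorem \ref{measure fiber} used precaliber; since we no longer have that hypothesis, we must use the scattered structure of fibers in its place.

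My plan for this crux is to build, by recursion on $s \in 2^{<\omega}$, a tree of nonempty clopen sets $(V_s)_{s \in 2^{<\omega}}$ in $K$ together with ordinals $\alpha_s < \omega_1$ so that $V_{s0}$ and $V_{s1}$ are disjoint subsets of $V_s$ obtained by splitting along $A_{\alpha_s}$, and so that for each $s$ the conditional measures on $V_s$ remain approximately as ``independent'' with respect to the remaining $\{A_\alpha\}$ as the original $\mu_\sigma$ were. At each node one uses Proposition \ref{disintegration} to find a closed set of $t$'s of positive $\nu$-measure on which $\mu_{t|n}$ witnesses the splitting for all large $n$, and then refines the current $\nu$-positive closed subset of $2^\omega$ accordingly; at the same time one shrinks the $V_s$ so that $f(V_s)$ sits inside an ever-smaller neighborhood, forcing a single limit point $t^*\in 2^\omega$. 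Since the $(V_s)$ are nested along branches and $V_{s0}\cap V_{s1}=\emptyset$, the map $\tau \mapsto \bigcap_n V_{\tau|n}$ embeds the Cantor space $2^\omega$ into $f^{-1}(t^*)$, contradicting that $f^{-1}(t^*)$ is scattered.

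The main obstacle is the joint bookkeeping at step $s$: one has to choose $\alpha_s$, the refinement of $V_s$, and the refinement of the associated $\nu$-positive witnessing set in $2^\omega$ so that the recursion can continue and so that the $V_s$'s really do concentrate on a single fiber; this is where the precaliber assumption of Theorem \ref{measure fiber} is replaced by a $\sigma$-complete selection done one branching at a time rather than all at once. If this direct tree construction fails, a fallback is to show that the assumption of scattered fibers makes the ``accumulation measure'' $\mu_t$ automatically purely atomic and then use a Fubini-type argument together with the independence of the $A_\alpha$'s to contradict $\mu_t(A_\alpha \bigtriangleup A_\beta) > \tfrac{1}{4}$ on a $\nu$-positive set of $t$'s for uncountably many pairs $\alpha \neq \beta$.
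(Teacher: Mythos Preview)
Your overall architecture---build a Cantor tree $(V_s)_{s\in 2^{<\omega}}$ of clopens concentrating over a single fiber $f^{-1}(t^*)$---is exactly the paper's, but the splitting mechanism you propose does not survive the recursion. There are two concrete obstructions. First, Proposition~\ref{disintegration} only guarantees that the witness set $F_A\subseteq 2^\omega$ has \emph{some} positive $\nu$-measure, with no lower bound; over the construction you must intersect one such witness per node of $2^{<\omega}$, and nothing forces a countable intersection of merely $\nu$-positive closed sets to remain nonempty. This is precisely what the precaliber hypothesis purchased in Theorem~\ref{measure fiber}, and ``doing it one branching at a time'' does not circumvent it. Second, the independence from Lemma~\ref{independent} is only over the fixed family $\mathcal{S}=\{f^{-1}[[\sigma]]\}$; once you condition on $V_s$, which is itself a Boolean combination of the $A_\alpha$'s, that identity no longer applies, and the approximation error $\mu(A_\alpha\bigtriangleup B_\alpha)<\varepsilon$ gets divided by $\mu(f^{-1}[[\sigma]])$, which tends to $0$ along any branch.

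The paper avoids both problems by abandoning the uncountable reservoir $\{A_\alpha\}$ entirely and instead proving a \emph{uniform splitting} lemma (Lemma~\ref{propertyPP}): whenever $\mu$ restricted to the current piece is non-separable---hence, by Lemma~\ref{propertyP}, not almost determined by closed cylinders---there exist a clopen $C\subseteq K$ and a closed $F\subseteq 2^\omega$ with $\mu(f^{-1}[F])>0$ such that $C$ splits \emph{every} $\mu$-positive closed sub-cylinder $f^{-1}[G]$, $G\subseteq F$. This is the missing idea. It lets you shrink $F$ freely at later stages without spoiling earlier splittings, so the decreasing sequence $(F_n)$ is produced by finitely many applications of the lemma per level, each nested inside the previous, with positivity guaranteed directly by condition~(2) of the lemma rather than by intersecting unrelated $\nu$-positive sets. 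No approximation, no weak$^*$ limits, and no $\omega_1$-indexed family are needed: one simply applies Lemma~\ref{propertyPP} to each $B_\tau\cap f^{-1}[F_n]$ in turn (using that the restricted measure is still non-separable after the initial Maharam reduction), obtains the next level of clopens and the next $F_{n+1}$, and at the end picks points $b_s\in\bigcap_n B_{s|n}\cap f^{-1}(t)$ by compactness.
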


We will need two lemmas.

\begin{lem} \label{propertyP}
	Assume that $\mu$ is a measure on $K$. Let $f\colon K\to 2^\omega$ be a continuous mapping. If $\mu$ is almost determined by closed cylinders, then $\mu$ is separable.
\end{lem}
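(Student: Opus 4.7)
The plan is to reduce everything to the fact that the push-forward measure $\nu=\mu f^{-1}$ on the compact metrizable space $2^\omega$ is automatically of countable Maharam type. Concretely, let $\mathcal{E}_0$ be the countable Boolean algebra generated by the basic clopens $\{[\sigma]\colon \sigma\in 2^{<\omega}\}$. Since $\nu$ is a Radon measure on $2^\omega$ and the clopens form a base, $\mathcal{E}_0$ approximates $\nu$: for every Borel $X\subseteq 2^\omega$ and every $\eta>0$ there is $E\in\mathcal{E}_0$ with $\nu(X\bigtriangleup E)<\eta$.

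Lift this countable family back to $K$ by setting $\mathcal{D}=\{f^{-1}[E]\colon E\in\mathcal{E}_0\}$, a countable family of clopen subsets of $K$. I would prove that $\mathcal{D}$ approximates $\mu$; this immediately gives that $\mu$ has Maharam type at most $\omega$, i.e.\ is separable. By the remark preceding Theorem \ref{kunen-vanmill}, in the zerodimensional setting it suffices to verify the approximation for clopen sets $A\subseteq K$.

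So fix clopen $A\subseteq K$ and $\varepsilon>0$. Since $A$ is open, the almost-determinacy hypothesis yields a closed set $F\subseteq 2^\omega$ such that the closed cylinder $E_1:=f^{-1}[F]$ satisfies $\mu(E_1\setminus A)=0$ and $\mu(E_1)>\mu(A)-\varepsilon/2$. Together these force
\[ \mu(A\bigtriangleup E_1)=\mu(A\setminus E_1)+\mu(E_1\setminus A)=\mu(A)-\mu(E_1)<\varepsilon/2. \]
Next, apply the separability of $\nu$ to choose $E_0\in\mathcal{E}_0$ with $\nu(F\bigtriangleup E_0)<\varepsilon/2$; then
\[ \mu(E_1\bigtriangleup f^{-1}[E_0])=\mu(f^{-1}[F\bigtriangleup E_0])=\nu(F\bigtriangleup E_0)<\varepsilon/2. \]
The triangle inequality for symmetric difference finally gives $\mu(A\bigtriangleup f^{-1}[E_0])<\varepsilon$ with $f^{-1}[E_0]\in\mathcal{D}$, completing the proof.

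The argument is quite direct and I do not anticipate a serious obstacle. The only bookkeeping point worth flagging is that ``almost determinacy'' applied to the open set $A$ must be unpacked as producing closed cylinders whose $\mu$-symmetric difference with $A$ is small, not merely whose measure is close to $\mu(A)$; this is automatic here because the ``outer'' error $\mu(E_1\setminus A)$ vanishes by hypothesis.
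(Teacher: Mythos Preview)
Your proof is correct and follows essentially the same route as the paper's: obtain a closed cylinder $f^{-1}[F]$ that is $\mu$-close to $A$ via almost-determinacy, then approximate $f^{-1}[F]$ by a clopen cylinder. The only cosmetic difference is that the paper approximates $F$ from outside by a clopen $C\supseteq F$ using that closed sets in $2^\omega$ are countable intersections of clopens, whereas you phrase the same step via the separability of the push-forward measure $\nu=\mu f^{-1}$; these are equivalent observations.
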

\begin{proof}
	We will show that the family of clopen cylinders approximates $\mu$. Of course, each closed subset of $2^\omega$ is a countable intersection of clopens. So, if
	$F\subseteq 2^\omega$ is
	closed, then 
	\[ \mu(f^{-1}[F]) = \inf\{\mu(f^{-1}[C])\colon F\subseteq C, C\mbox{ is clopen}\}. \]
	Let $A\subseteq K$ be $\mu$-measurable and let $\varepsilon>0$. By the assumption there is a closed $F\subseteq 2^\omega$ such that $\mu(f^{-1}[F]\setminus A)=0$ and $\mu(A\setminus f^{-1}[F])<\varepsilon/2$. Take a clopen set $C$ such that
	$F\subseteq C$ and 	$\mu(f^{-1}[C]) - \mu(f^{-1}[F])<\varepsilon/2$. Then 
	\[ \mu(A \bigtriangleup f^{-1}[C])<\varepsilon. \]
	As $A$ and $\varepsilon>0$ were chosen arbitrarily, $\mu$ is approximated by the clopen cylinders and so $\mu$ is separable.
\end{proof}

\begin{lem} \label{propertyPP}
	Assume that $K$ is zerodimensional and $f\colon K \to 2^\omega$ is a continuous mapping. Suppose that $\mu$ is not almost determined by closed cylinders. Then there is a clopen set $C\subseteq K$ and a closed set
	$F\subseteq 2^\omega$ such that
\begin{enumerate}
	\item $\mu(f^{-1}[F])>0$,
	\item whenever $G\subseteq F$ is a closed set and $\mu(f^{-1}[G])>0$, then $\mu(f^{-1}[G]\cap C)>0$ and $\mu(f^{-1}[G]\setminus C)>0$. 
\end{enumerate}
\end{lem}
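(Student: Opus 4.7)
The plan is to pick the witness $C$ from the hypothesis and then locate $F$ as a closed set inside the "genuinely intermediate" level set of a Radon--Nikodym density.

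First I would unpack the failure of almost-determination. Because $K$ is zerodimensional, the remark after the definitions lets me restrict to clopen $U$ in condition (\ref{***}). So there must exist a clopen set $C\subseteq K$ and $\delta>0$ such that
\[ \mu(C) > \sup\{\mu(f^{-1}[F'])\colon F'\subseteq 2^\omega \text{ closed},\ \mu(f^{-1}[F']\setminus C)=0\}+\delta. \]
This $C$ will be the clopen required in the conclusion.

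Next, I would introduce the pushforward $\nu=\mu\circ f^{-1}$ on $2^\omega$ and the auxiliary Borel measure $\rho$ defined by $\rho(B)=\mu(f^{-1}[B]\cap C)$. Clearly $\rho\leq \nu$, so $\rho$ is absolutely continuous with respect to $\nu$, and the Radon--Nikodym theorem yields a Borel function $h\colon 2^\omega\to[0,1]$ with $\rho(B)=\int_B h\,d\nu$ for every Borel $B\subseteq 2^\omega$. Intuitively, $h(t)$ is the conditional probability that a point of $f^{-1}(t)$ lies in $C$.

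The key computation is the identity
\[ \sup\{\mu(f^{-1}[F'])\colon F' \text{ closed},\ \mu(f^{-1}[F']\setminus C)=0\}=\nu(\{h=1\}). \]
For the $\leq$ direction, if $F'$ is closed and $\mu(f^{-1}[F']\setminus C)=0$, then $\rho(F')=\nu(F')$, hence $\int_{F'}(1-h)\,d\nu=0$, so $h=1$ on $F'$ $\nu$-a.e.\ and $\nu(F')\leq \nu(\{h=1\})$. For $\geq$, the set $\{h=1\}$ is Borel and $\nu$ is Radon on the compact metric space $2^\omega$, so it is inner-regular with respect to closed sets; any closed $F'\subseteq \{h=1\}$ satisfies $\rho(F')=\nu(F')$, that is $\mu(f^{-1}[F']\setminus C)=0$. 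Combining this identity with $\mu(C)=\int h\,d\nu$, the failure of almost-determination at $C$ gives $\int h\,d\nu>\nu(\{h=1\})$, and since $h\leq 1$ this forces
\[ \nu(\{t\in 2^\omega\colon 0<h(t)<1\})>0. \]
Using Radon inner-regularity once more, I pick a closed $F\subseteq \{0<h<1\}$ with $\nu(F)>0$; this already yields (1). For (2), any closed $G\subseteq F$ with $\nu(G)>0$ satisfies $h$ and $1-h$ strictly positive on $G$, so
\[ \mu(f^{-1}[G]\cap C)=\int_G h\,d\nu>0,\qquad \mu(f^{-1}[G]\setminus C)=\int_G (1-h)\,d\nu>0. \]

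The main obstacle is the first equivalence above: translating the set-theoretic/topological notion of closed cylinders whose $f$-preimage is $\mu$-almost contained in $C$ into the measure-theoretic statement that $h=1$ $\nu$-a.e.\ on the cylinder, and then recovering sharp equality via inner regularity of $\nu$. Everything else (choosing $C$, producing $F$ inside $\{0<h<1\}$, checking the splitting) is routine once that identification is in place.
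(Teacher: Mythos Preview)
Your argument is correct, but it follows a genuinely different route from the paper's. The paper does not introduce a Radon--Nikodym density at all; instead, for a given clopen $C$ it runs an exhaustion argument: one takes a maximal pairwise disjoint family $\mathcal{F}$ of closed $F\subseteq 2^\omega$ with $\mu(f^{-1}[F])>0$ and either $\mu(f^{-1}[F]\cap C)=0$ or $\mu(f^{-1}[F]\setminus C)=0$, checks (via inner regularity of the pushforward of $\mu_{|C}$) that $\mu(C\setminus\bigcup_{F\in\mathcal{F}}f^{-1}[F])=0$, and concludes that the finite unions of the members of $\mathcal{F}$ lying $\mu$-almost inside $C$ witness almost-determination at $C$. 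If this happens for every clopen $C$, one contradicts the hypothesis. Your approach replaces this maximal-family bookkeeping by the single function $h=d\rho/d\nu$ and the identification of the desired $F$ as a closed subset of the intermediate level set $\{0<h<1\}$; the key identity $\sup\{\nu(F')\colon F'\text{ closed},\ \mu(f^{-1}[F']\setminus C)=0\}=\nu(\{h=1\})$ that you verify plays the role of the exhaustion step. The paper's proof is slightly more elementary (no Radon--Nikodym), while yours is more structural: it pinpoints exactly where the splitting set $F$ must live and makes the verification of condition (2) a one-line integral computation rather than a contrapositive.
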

\begin{proof}
Let $C\subseteq K$ be a $\mu$-positive clopen. Assume that there is no closed $F\subseteq 2^\omega$ satisfying the above properties for $C$. 
	Let $\mathcal{F}$ be a maximal pairwise disjoint family of closed subsets of $2^\omega$ such that for each $F\in \mathcal{F}$ we have $\mu(f^{-1}[F])>0$ but either $\mu(f^{-1}[F]\cap C)=0$ or $\mu(f^{-1}[F]\setminus C)=0$. 

Then $\mu(C \setminus \bigcup_{F\in\mathcal{F}} f^{-1}[F]) = 0$. Otherwise,  we could find a closed $F\subseteq 2^\omega$ disjoint with all the members of $\mathcal{F}$ and such that $\mu(f^{-1}[F]\cap C)>0$. As $F$ does not satisfy (2), we could find a closed $G\subseteq F$ such that $\mu(f^{-1}[G])>0$ but such that either $\mu(f^{-1}[G]\cap C)=0$ or $\mu(f^{-1}[G]\setminus C)=0$. We could add $G$ to $\mathcal{F}$ violating its maximality.

Now, let $\mathcal{F}'$ be the family of all finite unions of elements of $\{F\in \mathcal{F}\colon \mu(f^{-1}[F]\setminus C)=0\}$ and notice that 
\[ \mu(C) = \sup\{\mu(f^{-1}[F])\colon F\in \mathcal{F}'\} \]
and for each $F\in \mathcal{F}'$
\[ \mu(f^{-1}[F]\setminus C) = 0. \]
\medskip

Therefore, if there is no clopen for which we can find a closed set satisfying (1) and (2), then $\mu$
is almost determined by closed cylinders, a contradiction. 
\end{proof}

\begin{proof} (of Theorem \ref{perfect fiber})
	Without loss of generality we can assume that $\mu_{|A}$ is non-separable for each $\mu$-positive $A\subseteq K$. Indeed, using Theorem \ref{maharam} we can find a closed $F\subseteq K$ such that $\mu_{| F}$ is homogeneous. Then, instead of $K$
	we may consider the support of $\mu_{|
	F}$ which clearly has the desired property. 

We will inductively construct a family $\{B_\tau\colon \tau\in 2^{<\omega}\}$ of clopen subsets of $K$ and a sequence $(F_n)_n$ of closed subsets of $2^\omega$ such that
\begin{enumerate}
	\item $(B_\tau \cap f^{-1}[F_n]) \subseteq (B_\sigma \cap f^{-1}[F_n])$ if $\sigma\subseteq \tau$ and $\tau\in 2^n$,
	\item $(B_{\tau\smallfrown 0} \cap B_{\tau \smallfrown 1}) \cap f^{-1}[F_{n+1}] = \emptyset$ for $\tau\in 2^n$,
	\item $\mu(f^{-1}[F_n])>0$ and $\mu(B_\tau \cap f^{-1}[G])>0$ if $\tau\in 2^n$, $G\subseteq F_n$ is closed and $\mu(f^{-1}[G])>0$,
	\item $F_{n+1}\subseteq F_n$ for each $n$ and $\lim_{n\to\infty} \mathrm{diam}(F_n) = 0$.
\end{enumerate}

First, assume that we have a family as above. Let $t$ be such that  $\{t\} = \bigcap F_n$. For each $s\in 2^\omega$ by (3) and by compactness we can choose $b_s \in  \bigcap_n B_{s|n} \cap \bigcap_n f^{-1}[F_n] = \bigcap_n B_{s|n} \cap f^{-1}(t)$. Let $B = \{b_s\colon s\in 2^\omega\} \subseteq f^{-1}(t)$ and notice
that the function $g\colon B \to 2^\omega$ given by $g(b_s)=s$ is continuous (if $\sigma\in 2^{<\omega}$, then $g^{-1}\big[ [\sigma] \big] = B \cap B_\sigma$, because of (1) and (2)). Hence, $B$ can be mapped continuously onto $2^\omega$ and so it is not scattered. 

To complete the proof we perform the promised inductive construction. Let $B_\emptyset=K$ and $F_0 = 2^\omega$. 

Assume that we have have $F_n$ and $B_\tau$ for all $\tau\in 2^{n}$. Enumerate $2^{n} = \{\tau_k\colon k < 2^{n}\}$. Let $H_0 = B_{\tau_0}\cap f^{-1}[F_n]$. According to (3) and to our preliminary assumption $\mu_{|H_0}$ is not separable and so it is not almost determined by closed cylinders. So, we can
apply Lemma \ref{propertyPP} to $H$ and $f_{|H_0}$ to find closed $G_0\subseteq
F_n$ such that $\mu(f^{-1}[G_0]\cap H_0)>0$ and a clopen subset $C_0\subseteq K$ such that
\begin{itemize}
	\item $\mu\left(f^{-1}[G] \cap (C_0 \cap H_0)\right)>0$ and $\mu\left(f^{-1}[G] \cap (H_0 \setminus C_0)\right)>0$ for each closed $G\subseteq G_0$ such that $\mu(f^{-1}[G])>0$,
	\item $\mathrm{diam}(G_0)<1/(n+1)$.
\end{itemize}
Then, proceed in the same manner, letting $H_{k+1} = B_{\tau_{k+1}}\cap f^{-1}[G_k]$ and finding $G_{k+1} \subseteq G_k$, $C_k$ for $k<2^{n}$.
Finally, define 
\begin{itemize}
	\item $F_{n+1} = G_{2^{n}-1}$, 
	\item $B_{\tau_k\smallfrown 0} = C_k \cap B_{\tau_k}$,
	\item $B_{\tau_k\smallfrown 1} = B_{\tau_k} \setminus C_k$.
\end{itemize}
It is straightforward to check that in this way we obtain the desired properties.
\end{proof}



	One may get an impression that being a measure almost determined by a family $\mathcal{E}$ is just a slightly weaker property than being a measure determined by $\mathcal{E}$. This impression is rather misleading. For example, in our
	setting every measure which is determined by closed cylinders is in fact determined by clopen cylinders. It follows from the fact that every closed $F\subseteq
	2^\omega$ can be written as $F = \bigcap C_n$, where $C_n$ is clopen for each $n$. If $U$ is an open set and $f^{-1}[F]\subseteq U$, then, by compactness, there is $n$ such that $f^{-1}[C_n]\subseteq U$. So, every measure determined by closed
	cylinders is in fact determined by a countable family and, consequently, its support is separable. In contrast, spaces supporting measures which are almost determined by closed cylinders may be very far from being separable (see Example \ref{almost determined}). 

	There are (at least consistently) spaces with scattered fibers which support measures and are non-separable (see remarks at the end of Section \ref{examples}). However, such spaces cannot be finitely-fibered.

	\begin{thm}\label{fin fibered}
  Assume $K$ is a zerodimensional compact space supporting a strictly positive measure $\mu$ and let $f\colon K\to	2^\omega$ be a finite-to-one continuous map. 
  Then $K$ is separable.
\end{thm}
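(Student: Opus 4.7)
The plan is to first reduce to the case where $\mu$ is separable by invoking Theorem~\ref{perfect fiber}, and then to use the finite-to-one structure of $f$ to build a countable dense subset of $K$.

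Since $f$ is finite-to-one, every fiber $f^{-1}(t)$ is a finite (hence scattered) subspace of $K$. So the contrapositive of Theorem~\ref{perfect fiber} applied to $K$, $\mu$, and $f$ forces $\mu$ to be separable: if $\mu$ were non-separable, some fiber would have to be non-scattered, contradicting finiteness of the fibers. Thus it suffices to show: a compact zerodimensional space $K$ supporting a separable strictly positive Radon measure and admitting a continuous finite-to-one map to $2^\omega$ is separable.

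To build a countable dense set in $K$, I would exploit both ingredients at once. By separability of $\mu$, pick a countable family $\mathcal{C}=\{C_n : n\in\omega\}$ of clopens of $K$ which is dense in $\mathfrak{M}(\mu)$ in the measure metric (using inner regularity to choose clopens). Together with the pullbacks $\{f^{-1}(V) : V\in\mathrm{Clopen}(2^\omega)\}$, these generate a countable Boolean subalgebra $\mathfrak{A}\subseteq\mathrm{Clopen}(K)$. The Stone space $K_0=\mathrm{Stone}(\mathfrak{A})$ is compact metrizable, hence separable, and the inclusion $\mathfrak{A}\hookrightarrow\mathrm{Clopen}(K)$ induces a continuous surjection $q\colon K\to K_0$. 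Since $f$ factors through $q$ via a continuous map $g\colon K_0\to 2^\omega$, the map $q$ is itself finite-to-one. Fix a countable dense $D_0\subseteq K_0$ and set $D:=q^{-1}(D_0)$; since $q$ has finite fibers, $D$ is countable.

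The main obstacle is showing that $D$ is dense in $K$. Equivalently, for every non-empty clopen $U\subseteq K$, one needs to exhibit some nonzero $A\in\mathfrak{A}$ with $A\subseteq U$, so that $q(A)$ is a non-empty open subset of $q(U)$ met by $D_0$. This is the step where the construction interacts nontrivially with strict positivity of $\mu$: the family $\mathcal{C}$ is dense in the measure algebra, so arbitrarily close (in measure) approximations of $U$ exist in $\mathfrak{A}$, and strict positivity of $\mu$ rules out ``massless leakage'' --- any clopen of measure zero is empty. The finite-to-one structure of $f$ is what keeps these approximations sharp on fibers, since otherwise (as the examples with merely scattered fibers show) the measure-theoretic density of $\mathcal{C}$ can fail to yield a topological refinement inside $U$. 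Once this refinement step is carried out, $D$ is dense in $K$, and $K$ is separable.
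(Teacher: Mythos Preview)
Your reduction to the case where $\mu$ is separable, via Theorem~\ref{perfect fiber}, is correct and is a genuinely different opening move from the paper's. The paper never invokes Theorem~\ref{perfect fiber}; instead it argues directly by contradiction, reducing to the case where every $\mu$-positive set is non-separable, and then iterates the failure of $\mu$ to be determined by closed cylinders to produce a strictly decreasing chain $L_0\supseteq L_1\supseteq\cdots$ of closed $\mu$-positive sets with the property that each $t\in f[L_n]$ has a point of its fiber in $L_n\setminus L_{n+1}$. Any $t=f(x)$ with $x\in\bigcap_n L_n$ then has an infinite fiber.

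However, your second half has a real gap that you have identified but not closed. You need that every nonempty clopen $U\subseteq K$ contains a nonempty element of $\mathfrak{A}$, and you offer only the observation that $\mathcal{C}$ approximates $U$ in measure together with the slogan that finite fibers ``keep these approximations sharp.'' Measure-density of $\mathcal{C}$ gives you $C\in\mathfrak{A}$ with $\mu(C\setminus U)$ arbitrarily small, but strict positivity only tells you $C\setminus U=\emptyset$ once $\mu(C\setminus U)=0$, and nothing in your construction forces that. The Alexandroff duplicate of $2^\omega$ (discussed immediately after the theorem in the paper) shows exactly why the ``$q$ is finite-to-one onto a separable space, so pull back a dense set'' step fails without further input: there the projection is $2$-to-$1$ onto $2^\omega$, yet the preimage of any countable set misses uncountably many isolated points. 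So the strict positivity of $\mu$ must be used in a substantive way beyond ``clopens of measure zero are empty,'' and your sketch does not say how. Completing this would require an argument of roughly the same weight as the paper's iteration --- one must somehow exploit finiteness of fibers to pass from measure-approximation to topological refinement, and that passage is precisely the content of the theorem.
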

\begin{proof}
	Suppose that $K$ is non-separable and let $f\colon K \to 2^\omega$ be a continuous mapping. 
  Without loss of generality we can assume that if $A\subseteq K$ is $\mu$-positive, then $A$ is not separable. Indeed, let $\mathcal{B}$ be a maximal pairwise disjoint family of $\mu$-positive sets $B$ such that $B$ is separable. The
	family $\mathcal{B}$ is clearly countable and so $B' = \bigcup \mathcal{B}$ contains a countable dense set $D$. If $\mu(K\setminus B')=0$, then $U\cap B'\ne \emptyset$ for each non-empty open $U\subseteq K$. Consequently, $U\cap D\ne \emptyset$
	for every non-empty open $U\subseteq K$ and
	so $K$ would be separable. So $\mu(K\setminus B')>0$ and there is a closed $\mu$-positive set $K' \subseteq K\setminus B'$. Then $A$ is non-separable for
	each $\mu$-positive $A\subseteq K'$. So, we can consider $K'$ instead of $K$ if needed.

  Since $K$ is not separable, $\mu$ is not determined by clopen cylinders and, according to above remarks, it is neither by closed cylinders. 
  It means that there is a closed $L\subseteq K$ such that $\mu(L)>0$ and for each $t\in f[L]$ we have $f^{-1}(t)\setminus L\ne \emptyset$. We can additionally assume that $L$ supports the measure $\mu_{|L}$ (throwing out all
  clopens $C$ such that $\mu(L\cap C)=0$). Since $L$ is not separable, the measure $\mu_{|L}$ is not determined by $\{f^{-1}[G]\cap L \colon G = \overline{G} \subseteq 2^\omega\}$.

  Subsequently using these remarks we can construct a sequence $(L_n)_n$ of closed sets such that $L_0 = K$ and for every $n$
  \begin{itemize}
	  \item $L_{n+1}\subseteq L_n$,
	  \item $\mu(L_n)>0$ for each $n$ and $L_n$ supports $\mu_{| L_n}$,
	  \item if $t\in f[L_n]$, then $f^{-1}(t)\cap (L_n \setminus L_{n+1}) \ne \emptyset$.
  \end{itemize}

  Let $x\in \bigcap L_n$ and let $t = f(x)$. The set $f^{-1}(t)$ is infinite since $f^{-1}(t)\cap (L_n \setminus L_{n+1})\ne \emptyset$ for every $n$. So $f$ is not a finite-to-one map.
\end{proof}

One cannot omit the assumption that $K$ supports a measure. E.g. let $K$ be the Alexandroff duplicate of $2^\omega$, i.e. $K = (2^\omega\times \{0\}) \cup (2^\omega \times \{1\})$ and the topology is generated by the sets of the form $\{(x,1)\}$ for $x\in 2^\omega$ and $C\times \{0,1\} \setminus \{(x,1)\}$ for a
clopen $C\subseteq 2^\omega$ and $x\in 2^\omega$. It is straightforward to check that $K$ is compact, Hausdorff and non-separable but the fibers of the natural retraction onto $2^\omega\times \{0\}$ are of size 2.

\section{Examples} \label{examples}

Every space can be mapped continuously into a metrizable compact space and all compact spaces which are not scattered can be mapped continuously onto $2^{\omega}$. Most of these mappings are not particularly interesting in context of its fibers.
However, quite often such mapping can be chosen in a more or less \emph{canonical} way. 
If a compact space is constructed as an inverse limit, then usually its first coordinate is compact and metrizable and the projection of the whole space onto it is of course continuous. Also, every Boolean algebra $\mathfrak{A}$ which is not superatomic (i.e. its
Stone space is not scattered) have a Cantor algebra $\mathfrak{C}$ as a subalgebra and then its
Stone space is mapped continuously onto $2^\omega$ by the function
 \[ f(x) = x_{|\mathfrak{C}}. \]

 The properties of fibers of such mappings are sometimes easier to grasp than the \emph{global} properties of the space and studying them can lead to interesting pieces of information about the space itself. In particular, it is not always easy to
 show directly that a space cannot be mapped continuously onto $[0,1]^{\omega_1}$. Usually one has to prove
 that a space possesses some other property which makes such mapping impossible (e.g. is countably tight, Corson compact, hereditary Lindel\"{o}f etc.). Similarly, sometimes it not clear if a given space carries a non-separable measure. Results from
 previous sections provide here yet another tool.

 In this section we will overview several examples which we found interesting. We begin with remarks on the Stone space of $\mathfrak{M}$. Then we present two non-separable metrizably-fibered spaces supporting a measure inspired by a certain
 Kunen's construction. We finish with examples of spaces with scattered fibers of Bell and Todor\v{c}evi\'{c}. 

We will use the fact that every (finitely-additive) measure on a Boolean algebra can be extended uniquely to a Radon measure on its Stone space. If $A\in \mathfrak{A}$, then by $\widehat{A}$ we denote the appropriate clopen subset of
$\mathrm{Stone}(\mathfrak{A})$, but only in case of a possible confusion. Otherwise, we do not distinguish in notation between elements of a Boolean algebra and clopens of its Stone space. For the sake of brevity we are not going to define all the
notions which will appear in what follows, but the reader can easily find the definitions following the given references. 

\begin{example}\label{almost determined} Stone space of the measure algebra.

	Let $K$ be the Stone space of $\mathfrak{M}$. Let $\mathfrak{C}\subseteq \mathfrak{M}$ be the Boolean algebra generated by $\{[C]_{\lambda=0}\colon C\in \mathrm{Clop}(2^\omega)\}$ and let $f\colon K \to 2^\omega$ be defined by $f(x) =
	x_{|\mathfrak{C}}$. Notice that the fibers of $f$ are homeomorphic to each other. Indeed, if $t\in 2^\omega$, then $h\colon f^{-1}(0) \to f^{-1}(t)$ given by $h(x) = x+t$ is a homeomorphism. (Here 0 stands for the element of $2^\omega$ constantly
	equal to 0 and $x+t$ is the ultrafilter consisting of $\{A+t\colon A\in x\}$, where $[F]_{\lambda = 0} + t = [F+t]_{\lambda = 0}$.)

	It is well-known that the space $K$ is not separable and it maps continuously onto $[0,1]^{\mathfrak{c}}$. So, its fibers map continuously onto $[0,1]^{\omega_1}$. However, $K$ supports a measure which is almost determined by closed cylinders. 

	Let $\widehat{\lambda}$ be the unique extension of $\lambda$ to $K$. 
	Let $M\in \mathfrak{M}$ and $\varepsilon>0$. There is $M_0 \in \mathfrak{M}$ such that $M_0 = [F]_{\lambda=0}$ for some closed $F\subseteq 2^\omega$, $M_0
	\subseteq M$ and $\lambda(M\setminus M_0)<\varepsilon$. Then $\widehat{\lambda}(f^{-1}[F]\setminus M_0)=0$ since $\lambda(F)=\widehat{\lambda}(M_0)$. Therefore $\widehat{\lambda}(f^{-1}[F]\setminus M)=0$. But $\widehat{\lambda}(M\setminus
	f^{-1}[F]) \leq \widehat{\lambda}(M\setminus M_0)<\varepsilon$.
\end{example}

We will now present an example of a non-separable space supporting a measure which cannot be mapped continuously onto $[0,1]^{\omega_1}$. The latter property follows from the fact that the presented space is metrizably-fibered (because of
Theorem \ref{big fibered} and the fact that metrizable spaces cannot be mapped continuously onto $[0,1]^{\omega_1}$). We show that one can demand that the supported measure is non-separable. The construction is in the spirit of \cite{Kunen} (although Kunen constructed his space as an inverse limit). 

If $\mathcal{I}$ is an ideal of subsets of $K$, then
\[ \mathrm{add}(\mathcal{I}) = \min\{|\mathcal{A}|\colon \mathcal{A}\subseteq \mathcal{I}, \ \bigcup \mathcal{A}\notin \mathcal{I}\}, \]
\[ \mathrm{cof}(\mathcal{I}) = \min\{|\mathcal{A}|\colon \forall I \in \mathcal{I} \ \exists A\in \mathcal{A} \ I\subseteq A\}. \]

\begin{example}\label{kunen} Non-separable measure with small non-separable support under the assumption $\mathrm{cov}(\mathcal{N})=\omega_1$.

	Let $\{N_\alpha\colon \alpha<\omega_1\} \subseteq \mathcal{N}$ be an increasing family witnessing $\mathrm{cov}(\mathcal{N})=\omega_1$. 
	Let $\mathcal{F} = \{F_\alpha\colon \alpha<\omega_1\}$ be a family of closed subsets of $2^\omega$ such that for each $\alpha<\omega_1$
	\begin{itemize}
		\item[a)] $F_\alpha \cap N_\alpha = \emptyset$,
		\item[b)] $\lambda(F_\alpha)>0$.
	\end{itemize}
	It can be easily seen that such family can be constructed by a transfinite induction.

	Let $\mathfrak{C}$ be the Boolean algebra of clopens of $2^\omega$. Generate a Boolean algebra $\mathfrak{A}$ by $\mathfrak{C}$ and the family $\{F_\alpha\colon \alpha<\omega_1\}$. Denote by $K'$ its Stone space and by $f'\colon K \to
	\mathrm{Stone}(\mathfrak{C}) = 2^\omega$ the (continuous) mapping $f(x)=x_{| \mathfrak{C}}$. The Boolean algebra $\mathfrak{A}$ carries the
	measure $\lambda$ (which does not need to be strictly positive on $\mathfrak{A}$). Let $\widehat{\lambda}$ be the unique extension
	of the measure $\lambda$ to $K'$. Let $K$ be the support of $\widehat{\lambda}$ and let $f = f'_{| K}$. If $t\in 2^\omega$, then $t\in
	N_\alpha$ for some $\alpha<\omega_1$. Hence, $t\notin F_\beta$ for $\beta\geq \alpha$. So, the topology of $f^{-1}(t)$ is generated by countably many sets and, thus, $f^{-1}(t)$ is metrizable.
	
	Moreover, $K$ is not separable. If $X$ is a countable subset of $K$, then $f[X]$ is a countable subset of $2^\omega$ and so we can find
	$\alpha<\omega_1$ such that $f[X]\subseteq N_\alpha$. Then, $F_{\alpha+1}\cap f[X]=\emptyset$ and so $\widehat{F}_{\alpha+1} \cap X = \emptyset$. But $\widehat{\lambda}(\widehat{F}_{\alpha+1})>0$ and therefore $K\cap \widehat{F}_{\alpha+1}$ is
	a non-empty clopen subset of $K$ disjoint from $X$.

	Using this method we can construct a space like above which additionally supports a non-separable measure. It will be more convenient to work in $2^{\omega_1}$ instead of $2^\omega$. Let $\pi\colon 2^{\omega_1} \to 2^\omega$ be the standard
	projection. Notice that the family $\{\pi^{-1}[N_\alpha]\colon \alpha<\omega_1\}$ is a family of $\lambda_{\omega_1}$-null sets covering $2^{\omega_1}$. Consider a family $\mathcal{F} = \{F_\alpha\colon \alpha<\omega_1\}$ of closed
	subsets of $2^{\omega_1}$ such that for each $\alpha<\omega_1$
	\begin{itemize}
		\item[a)] $F_\alpha \cap \pi^{-1}[N_\alpha]=\emptyset$,
		\item[b)] $\inf\{\lambda_{\omega_1}(F_\xi \bigtriangleup F_\alpha)\colon \xi<\alpha\}>0$. 
	\end{itemize}
	As before, it can be obtained by a simple transfinite induction. Condition (b) is easy to achieve since $\lambda_{\omega_1}$ is non-separable and so the families $\{F_\xi\colon \xi<\alpha\}$ do not approximate $\lambda_{\omega_1}$ for
	$\alpha<\omega_1$.

	Generate the Boolean algebra $\mathfrak{A}$ by $\mathrm{Clop}(2^{\omega_1})$ and the family $\{F_\alpha\colon \alpha<\omega_1\}$ and let $K$ be defined in an analogous way as above, so that $\widehat{\lambda}_{\omega_1}$ is
	strictly positive on $K$. If $g\colon K \to 2^{\omega_1}$ is defined by $g(x)=x_{|\mathrm{Clop}(2^{\omega_1})}$ then the mapping $f = \pi\circ g\colon K\to 2^\omega$ is continuous. As before, if $t\in N_\alpha$, then $\widehat{F}_\beta \cap
	f^{-1}(t)=\emptyset$ for every $\beta\geq \alpha$ and so $K$ is metrizably-fibered.

	Clearly, there is $\varepsilon>0$ and $\Lambda\subseteq \omega_1$ of size $\omega_1$ such that 
	\[ \inf\{\lambda_{\omega_1}(F_\xi \bigtriangleup F_\alpha)\colon \xi<\alpha, \xi, \alpha\in \Lambda\} > \varepsilon. \]
	Hence, $\widehat{\lambda}_{\omega_1}(\widehat{F}_\alpha \bigtriangleup \widehat{F}_\beta)>\varepsilon$ for each $\alpha, \beta\in \Lambda$ and so $\widehat{\lambda}$ is non-separable.

	By manipulating the conditions imposed on the family $\mathcal{F}$ one can obtain spaces satisfying various properties: e.g. Kunen in \cite{Kunen} provided  a $\mathsf{CH}$ example of a Corson compact L-space
	supporting a non-separable measure $\mu$ for which $\mu(A)=0$ if and only if
	$A$ is metrizable. In \cite{Kunen-vanMill}[Theorem 1.2] the authors proved that such space can be constructed assuming only that $\mathrm{cof}(\mathcal{N}_{\omega_1})=\omega_1$ (both of these spaces are metrizably-fibered). In
	\cite{Kunen-vanMill}[Theorem 1.1] it is also shown that
	$\mathrm{cov}(\mathcal{N})>\omega_1$ implies that there is a Corson compact space supporting a non-separable measure. It is not clear for us if this space is metrizably-fibered. If not, then perhaps the assumption of Theorem
	\ref{measure fiber} can be relaxed to $\mathrm{cov}(\mathcal{N})>\omega_1$. Other interesting examples inspired by the Kunen's example can be found e.g. in \cite{Plebanek}.  

\end{example}

Now we turn our attention to examples of spaces with scattered fibers. We will start with a technique of \emph{total ideal spaces} developed by Murray Bell which was used to produce several interesting spaces (see \cite{Bell} and
\cite{Bell-gap}). We will overview
two of them. The presentation differs from that of Bell: we find more convenient to see them as Stone spaces of certain Boolean algebras.

\begin{example} Total ideal spaces. \label{total}


In \cite{Bell} the author constructs a compact separable space which cannot be mapped continuously onto $[0,1]^{\omega_1}$ and which does not have a countable $\pi$-base. 

Let $\mathcal{T} = \{T_\alpha\colon \alpha<\omega_1\}$ be any $\subseteq^*$-increasing chain in $P(\omega)$ such that $T_0=\emptyset$. For each $A\subseteq \omega$ let $X_A = \{x\in 2^\omega\colon x(n)=1\mbox{ if }n\in A\}$. Generate a Boolean
algebra $\mathfrak{B}$ of subsets of $2^\omega$ by the family:
\[ \{X_A\colon A =^* T_\alpha\mbox{ for some }\alpha<\omega_1\} \]
and let $K_\mathcal{T}$ be its Stone space.

The space $K_\mathcal{T}$ is separable (and so it supports a measure) and it does not have a countable $\pi$-base (see \cite{Bell} or \cite{MirnaPbn} for the proofs of these statements). 
Bell proved that it does not map continuously onto $[0,1]^{\omega_1}$ by showing that it is scattered in the $G_\delta$ topology. We will show that it is in fact scatteredly fibered. First, notice that the algebra $\mathfrak{C}$ generated by $\{X_A\colon A=^* \emptyset\} \subseteq \mathfrak{B}$ is countable and non-atomic and so it is isomorphic to the Cantor algebra. The mapping $f\colon K_\mathcal{T} \to \mathfrak{C}$, given by
$f(x)=x_{|\mathfrak{C}}$, is
continuous. We will show that fibers of $f$ are homeomorphic to ordinal numbers (and so, in particular, they are scattered). Indeed, let $t\in 2^\omega$ and let
$\alpha<\beta$. If $n>\max (T_\alpha\setminus T_\beta)$, then \[ (f^{-1}([t|n]) \cap T_\alpha) \subseteq (f^{-1}([t|n]) \cap T_\beta)\] unless $f^{-1}\big(([t|n]) \cap T_\beta\big)$ is empty. Hence, if $T_\alpha \cap f^{-1}(t) \ne \emptyset$ and $T_\beta
\cap f^{-1}(t) \ne \emptyset$, then \[ T_\alpha \cap f^{-1}(t) \subseteq T_\beta \cap f^{-1}(t), \]
and thus the topology of $f^{-1}(t)$ is generated by a well-ordered family.

It is not difficult to see that if we apply the above machinery to a family different than $\mathcal{T}$ (but containing all finite sets), then the resulting space will still be separable (see \cite[Proposition 6.5]{MirnaPbn}). In \cite{Bell-gap} Bell modified this technique to provide an interesting example of a non-separable space. 

For $A$, $B\subseteq \omega$ let \[X_{A,B} = \{x\in 2^\omega\colon x(n)=1 \mbox{ if }n\in A\mbox{ and }x(n)=0 \mbox{ if }n\in B\}. \]
Let $\mathcal{G} = \{(L_\alpha, R_\alpha)\colon \alpha<\kappa\}$ be a \emph{pregap} on $\omega$ \emph{of height} $\kappa$, i.e. $(L_\alpha)_{\alpha<\kappa}$, $(R_\alpha)_{\alpha<\kappa}$ are $\subseteq^*$-increasing and $L_\alpha \cap R_\alpha =
\emptyset$ for each $\alpha<\kappa$. We call
$\mathcal{G}$ a \emph{gap} if there is no $L\subseteq \omega$
such that $L_\alpha\subseteq^* L$ and $R_\alpha \cap L =^* \emptyset$ for each $\alpha<\kappa$. A gap $\mathcal{G}$ is \emph{destructible} if there is a ccc forcing $\mathbb{P}$ such that \[\Vdash_{\mathbb{P}} \check{\mathcal{G}} \mbox{ is not a gap.}\]
Note that gaps of height $\kappa>\omega_1$ are always destructible.

Generate a Boolean algebra $\mathfrak{A}_\mathcal{G}$ by the family:
\[  \{X_{A,B}\colon A=L_\alpha, B=R_\alpha \mbox{ for some }\alpha<\kappa \} \]
and let $K_\mathcal{G}$ be its Stone space.

The space $K_\mathcal{G}$ has scattered fibers. It can be proved as in the case of $K_\mathcal{T}$. Hence, $K_\mathcal{G}$ cannot be mapped continuously onto $[0,1]^{\omega_1}$. Bell proved that $K_\mathcal{G}$ is
not separable if and only if $\mathcal{G}$ is a gap. Moreover, if $\mathcal{G}$ is destructible, then $K_\mathcal{G}$ is ccc. Thus, every example of a destructible gap can be translated, using the above machinery, to an example of ccc non-separable space which
cannot be mapped continuously onto $[0,1]^{\omega_1}$. Destructible gaps do not always exist. Actually, Bell remarked that under $\mathsf{OCA}$ there is a very general reason why using this technique one cannot obtain a ccc non-separable space (see
\cite[Fact 2.3]{Bell}). However, since due to Kunen's result (see \cite{Baumgartner84}) $\mathsf{MA}_{\omega_1}$ is consistent
with the existence of gap of height $\omega_2$ (which is, therefore, destructible), it follows that $\mathsf{MA}_{\omega_1}$ is consistent with the existence of a ccc non-separable space which cannot be mapped onto $[0,1]^{\omega_1}$. 
\end{example}

In \cite{Todorcevic}[Theorem 8.4] Todor\v{c}evi\'{c} carried out a $\mathsf{ZFC}$ construction of a ccc non-separable space which cannot be mapped continuously onto $[0,1]^{\omega_1}$. He showed that his space can be mapped continuously onto $2^\omega$ by a
function whose fibers are homeomorphic to ordinal numbers.

\begin{example} Todor\v{c}evi\'{c}'s  ccc non-separable small space.\label{Stevo}

Let $\mathcal{S}$ be the set of \emph{slaloms}, i.e.
\[ \mathcal{S} = \{S\subseteq \omega\times\omega \colon |S(n)|\leq n \}. \]
Let $\Omega = \{(S,n)\colon n\in\omega, \ S\in \mathcal{S}, \ S\subseteq (n\times n)\}$.
For each $A\in \mathcal{S}$ define 
\[ T_A = \{(T,n)\in \Omega \colon A\cap (n\times n) \subseteq T\}. \]
For $(S,n)\in \Omega$ let
\[ T_{(S,n)} = \{(T,m)\in \Omega \colon m\geq n, T\cap (n\times n) = S\}. \]
Let 
\[ \mathcal{Z} = \{S\subseteq \omega\times \omega\colon S\in \mathcal{S}\mbox{ and } \lim_n |S(n)|/n = 0 \}. \]

There is a family $(A_\alpha)_{\alpha<\mathrm{add}(\mathcal{N})}$ of elements of $\mathcal{Z}$ such that $A_\alpha \subseteq^* A_\beta$ if $\alpha<\beta$ and there is no $S\in\mathcal{S}$ such that $A_\alpha \subseteq^* S$ for every $\alpha<\mathrm{add}(\mathcal{N})$ (see \cite{Kunen-Fremlin}). 

Let $\mathcal{A} = \{A\in \mathcal{Z}\colon A =^* A_\alpha \mbox{ for some }\alpha<\mathrm{add}(\mathcal{N})\}$ and 
\[ \mathfrak{T}_\mathcal{A} = {\rm alg}\left(\{T_A \colon A\in \mathcal{A}\} \cup \{T_{(S,n)}\colon (S,n)\in\Omega\}\right). \]
Finally let $K_\mathcal{A}$ be the Stone space of $\mathfrak{T}_\mathcal{A}/{\rm Fin}$. 

Notice that the algebra $\mathfrak{C} = \mathrm{alg}(\{T_{(S,n)}\colon (S,n)\in \Omega\})$ is a countable non-atomic Boolean algebra and hence it is isomorphic to the Cantor algebra.
So, the mapping $f\colon K_\mathcal{A}\to \mathrm{Stone}(\mathfrak{C})$ given by $f(x) = x_{|\mathfrak{C}}$ is a continuous mapping into $2^\omega$. We will show that $K_\mathcal{A}$ has scattered fibers (see also
\cite[Claim 4, Theorem 8.4]{Todorcevic}). 

Let $t\in \mathrm{Stone}(\mathfrak{C})$ and let $A$, $B$ be such that $A=^* A_\alpha$, $B=^* A_\beta$ for some $\alpha\leq \beta < \mathrm{add}(\mathcal{N})$. Suppose that $T_A\cap f^{-1}(t)\ne \emptyset$ and $T_B\cap f^{-1}(t)\ne \emptyset$. There
is $n$ such that $A \setminus (n\times n) \subseteq B$. Let $S\subseteq (n\times n)$ be such that $T_{(S,n)} \in t$. Then $T_A \cap T_{(S',n')} \subseteq T_B \cap T_{(S',n')}$ for each $n'>n$ and $T_{(S',n')}\in t$ and, consequently, $T_A \cap f^{-1}(t) \subseteq
T_B \cap f^{-1}(t)$. Therefore, the topology of $f^{-1}(t)$ is induced by a well-ordered chain and so it is homeomorphic to an ordinal number. 

Todor\v{c}evi\'{c} showed that $K_\mathcal{A}$ is ccc and non-separable, obtaining therefore a $\mathsf{ZFC}$ example of ccc non-separable space without a continuous mapping onto $[0,1]^{\omega_1}$.
\end{example}


One of the motivations to study the fibers of spaces supporting measures was the question about the existence of non-separable spaces supporting measures which cannot be mapped continuously onto $[0,1]^{\omega_1}$. Actually, some of the theorems
presented in this article were proved in an attempt to show that spaces with scattered fibers supporting measures have to be separable (and, in particular, to prove that the space from Example \ref{Stevo} cannot support a measure). Our efforts were
hopeless. First, in \cite{Pbn-Grzes-16}, under $\mathsf{MA}$, the authors provided an example of a non-separable space with scattered fibers supporting a measure (in a sense distilling some ideas present in Example \ref{kunen} and Example \ref{total}). Then, it
turned out that Todor\v{c}evi\'{c}'s construction described in Example \ref{Stevo} can be modified to obtain a non-separable space with scattered fibers supporting a measure
assuming only that $\mathrm{add}(\mathcal{N}) = \mathrm{non}(\mathcal{M})$ (see \cite{Tanmay}). 

We still do not know if there is a $\mathsf{ZFC}$ example of such space:

\begin{prob}
	Is it consistent that spaces with scattered fibers and supporting measures are separable?
\end{prob}

\section{Acknowledgements}
This research was partially done whilst the author was visiting fellow at the Isaac Newton Institute for Mathematical Sciences, Cambridge, in the
programme `Mathematical, Foundational and Computational Aspects of the Higher Infinite' (HIF).
The author would like to thank Grzegorz Plebanek and Piotr Drygier for very helpful discussions on the subject of this paper.

\bibliographystyle{alpha}
\bibliography{smallb}

\end{document}